\newtheorem{thm}{Theorem}[section]
\newtheorem{prop}[thm]{Proposition}
\newtheorem{lem}[thm]{Lemma}
\theoremstyle{definition}
\newtheorem*{assumption}{Assumption}
\theoremstyle{remark}
\numberwithin{equation}{section}
\newcommand{\ff}{\mathfrak{f}}
\newcommand{\fg}{\mathfrak{g}}
\newcommand{\fh}{\mathfrak{h}}
\newcommand{\bZ}{\mathbb{Z}}
\newcommand{\bN}{\mathbb{N}}
\newcommand{\Perm}{\mathrm{Perm}}
\newcommand{\Hol}{\mathrm{Hol}}
\newcommand{\Aut}{\mathrm{Aut}}
\newcommand{\Inn}{\mathrm{Inn}}
\newcommand{\Out}{\mathrm{Out}}
\newcommand{\Hom}{\mathrm{Hom}}
\newcommand{\conj}{\mathrm{conj}}
\newcommand{\GL}{\mathrm{GL}}
\newcommand{\SL}{\mathrm{SL}}
\newcommand{\PSL}{\mathrm{PSL}}
\newcommand{\PSU}{\mathrm{PSU}}
\begin{document}

\large 

\title[Hopf-Galois structures on finite extensions with quasisimple Galois group]{Hopf-Galois structures on finite extensions \\with quasisimple Galois group}

\author{Cindy (Sin Yi) Tsang}
\address{School of Mathematics (Zhuhai), Sun Yat-Sen University, Zhuhai, Guangdong, China}
\email{zengshy26@mail.sysu.edu.cn}\urladdr{http://sites.google.com/site/cindysinyitsang/} 

\date{\today}

\maketitle

\vspace{-2mm}

\begin{abstract}Let $L/K$ be a finite Galois extension of fields with Galois group $G$. It is known that $L/K$ admits exactly two Hopf-Galois structures when $G$ is non-abelian simple. In this paper, we extend this result to the case when $G$ is quasisimple. \end{abstract}

\vspace{-1mm}

\tableofcontents

\vspace{-4mm}

\section{Introduction}

Let $L/K$ be a finite Galois extension of fields with Galois group $G$. By \cite{GP}, we know that each Hopf-Galois structure $\mathcal{H}$ on $L/K$ is associated to a group $N_{\mathcal{H}}$ of the same order as $G$. For each group $N$ of order $|G|$, define
\[ e(G,N) = \#\{\mbox{Hopf-Galois structures $\mathcal{H}$ on $L/K$ with $N_\mathcal{H}\simeq N$}\}.\]
Let $\Perm(N)$ be the group of all permutations on $N$. Recall that a subgroup of $\Perm(N)$ is \emph{regular} if its action on $N$ is regular. For example, clearly $\lambda(N)$ and $\rho(N)$ are regular subgroups of $\Perm(N)$, where
\[\begin{cases}
\lambda:N\longrightarrow \Perm(N);\hspace{1em}\lambda(\eta) = (x\mapsto\eta x)\\
\rho:N\longrightarrow\Perm(N);\hspace{1em}\rho(\eta) = (x\mapsto x\eta^{-1})
\end{cases}\]
are the left and right regular representations of $N$. By work of \cite{GP} and \cite{By96}, we have the formula
\[e(G,N) = \frac{|\Aut(G)|}{|\Aut(N)|}\cdot\#\left\{\begin{array}{c}\mbox{regular subgroups of $\Hol(N)$}\\\mbox{which are isomorphic to $G$}\end{array}\right\},\]
where $\Hol(N)$ is the \emph{holomorph} of $N$ and is defined to be
\[ \Hol(N) = \rho(N) \rtimes \Aut(N).\]
The calculation of $e(G,N)$ has been an active line of research because Hopf-Galois structures have application in Galois module theory; see \cite{Childs book} for more details. Let us also note in passing that regular subgroups of the holomorph are related to set-theoretic solutions to the Yang-Baxter equation; see \cite{YBE}.

\vspace{1mm}

For $N\simeq G$, the number $e(G,N)$ must be non-zero because $\lambda(N)$ and $\rho(N)$ are regular subgroups of $\Hol(N)$; note that $\lambda(N)$ and $\rho(N)$ are equal exactly when $N$ is abelian. For $N\not\simeq G$, the number $e(G,N)$ could very well be zero. In certain extreme cases, it might happen that
\begin{equation}\label{extreme} e(G,N) = \begin{cases} 1 & \mbox{for $N\simeq G$ when $G$ is abelian},\\
2 & \mbox{for $N\simeq G$ when $G$ is non-abelian},\\
0 &\mbox{for all other $N\not\simeq G$}.
\end{cases}\end{equation}
For $G$ abelian, by \cite[Theorem 1]{By96} we know exactly when (\ref{extreme}) occurs:

\begin{thm}If $G$ is a finite abelian group, then $(\ref{extreme})$ holds precisely when the orders of $G$ and $(\bZ/|G|\bZ)^\times$ are coprime.
\end{thm}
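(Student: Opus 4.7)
Both directions of the biconditional are proved via the formula for $e(G,N)$ stated in the introduction; set $n := |G|$.

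For the \emph{if} direction, suppose $\gcd(n,\varphi(n))=1$. I would first invoke the classical number-theoretic fact that under this hypothesis every group of order $n$ is cyclic: the condition forces $n$ to be squarefree (otherwise some $p^{a-1}$ with $a\geq 2$ would divide $\varphi(n)$) and no prime divisor $p$ of $n$ to divide $q-1$ for any other prime $q\mid n$, after which a standard Sylow argument finishes. In particular $G$ is cyclic, and every $N$ with $|N|=n$ satisfies $N\simeq G$, so the third line of (\ref{extreme}) is automatic. For the first line, I would count regular subgroups of order $n$ in $\Hol(G)$: since $|\Hol(G)|=n\varphi(n)$, any such subgroup $H$ projects to $\Aut(G)$ with image of order dividing both $n$ and $\varphi(n)$, hence trivial. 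Therefore $H\subseteq \rho(G)$, and by cardinality $H=\rho(G)=\lambda(G)$, giving $e(G,G)=1$.

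For the \emph{only if} direction, I would argue the contrapositive: assume a prime $p$ divides both $n$ and $\varphi(n)$, and exhibit a violation of (\ref{extreme}). Since $p\mid n$, $G$ has an element $g$ of order $p$, and since $p\mid\varphi(n)=|\Aut(G)|$, there is an automorphism $\sigma\in\Aut(G)$ of order $p$. I would use this pair to build a semidirect-product-type twist of $\rho(G)$ by $\sigma$ inside the ambient symmetric group $\Perm(N)$, producing either (a) a regular subgroup of $\Hol(G)$ isomorphic to $G$ but distinct from $\lambda(G)$, forcing $e(G,G)\geq 2$, or (b) a group $N\not\simeq G$ of order $n$ (for instance a nontrivial semidirect product of cyclic factors) whose holomorph contains a regular subgroup isomorphic to $G$, forcing $e(G,N)>0$. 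Either outcome contradicts (\ref{extreme}).

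The main obstacle is this converse direction: the construction of the additional regular subgroup breaks into several sub-cases according to the structure of $G$ (cyclic versus non-cyclic abelian) and of $n$ (whether $p^2\mid n$, and which prime divisors of $n$ divide $q-1$ for other prime divisors $q$). In each sub-case one must produce an explicit regular embedding and verify its isomorphism type. This constructive case analysis is the technical heart of the theorem, carried out in detail in Byott's original proof \cite{By96}.
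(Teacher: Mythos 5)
The paper gives no proof of this theorem beyond the citation to \cite[Theorem 1]{By96}, so the question is whether your argument stands on its own. Your \emph{if} direction does: granting the classical fact that $\gcd(n,\varphi(n))=1$ forces every group of order $n$ to be cyclic, your order-counting argument (any subgroup $H$ of $\Hol(G)$ of order $n$ projects to $\Aut(G)$ with image of order dividing both $n$ and $\varphi(n)$, hence $H=\rho(G)=\lambda(G)$) is complete, correct, and pleasantly elementary.

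The \emph{only if} direction, however, is a plan rather than a proof, and that is where essentially all the content of the theorem lies. You never construct the promised ``semidirect-product-type twist'': you assert that it will yield either (a) a second regular subgroup of $\Hol(G)$ isomorphic to $G$ or (b) some $N\not\simeq G$ with $e(G,N)>0$, but you do not say which alternative occurs under which hypotheses, you verify neither regularity nor the isomorphism type in any case, and you explicitly defer the ``technical heart'' to Byott --- which is the entire converse. Two concrete symptoms of the gap: first, the identity $\varphi(n)=|\Aut(G)|$ you invoke to obtain an order-$p$ automorphism is false for non-cyclic abelian $G$ (e.g.\ $|\Aut(C_2\times C_2)|=6$ while $\varphi(4)=2$); the divisibility $p\mid|\Aut(G)|$ that you actually need can be salvaged, but only by a separate argument splitting into the cases $p^2\mid n$ and $p\mid q-1$. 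Second, your suggested witness in (b), ``a nontrivial semidirect product of cyclic factors,'' is not always the right object: for $G=C_4$ the only other group of order $4$ is the abelian group $C_2\times C_2$, and the violation of $(\ref{extreme})$ comes from the regular $4$-cycles inside $\Hol(C_2\times C_2)\simeq S_4$, which your recipe does not produce. As written, the converse reduces to ``cite Byott,'' i.e.\ to exactly what the paper already does, so the sketch does not constitute an independent proof of that direction.
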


For $G$ non-abelian, the situation is more complicated. By \cite{Childs simple,Byott simple}, we have:

\begin{thm}\label{simple thm}If $G$ is a finite non-abelian simple group, then $(\ref{extreme})$ holds.
\end{thm}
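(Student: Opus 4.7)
The plan is to use the formula in the excerpt to reduce everything to counting regular subgroups $H\le\Hol(N)$ isomorphic to $G$, for each group $N$ of order $|G|$. Since $\rho(N)$ is normal in $\Hol(N)$, the intersection $H\cap\rho(N)$ is normal in $H$, so simplicity of $G$ yields the dichotomy $H\cap\rho(N)\in\{1,H\}$. If $H\cap\rho(N)=H$, then order matching forces $H=\rho(N)$, which in particular requires $N\simeq G$; this contributes the single subgroup $\rho(N)$ whenever $N\simeq G$ and nothing otherwise.

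Assume now $H\cap\rho(N)=1$. The projection $\pi\colon\Hol(N)\to\Aut(N)$ is then injective on $H$, so $T:=\pi(H)$ is a copy of $G$ inside $\Aut(N)$. Writing each element of $H$ uniquely as $\rho(\eta_\sigma)\sigma$ with $\sigma\in T$ turns the group law of $H$ into the $1$-cocycle identity $\eta_{\sigma\tau}=\eta_\sigma\cdot\sigma(\eta_\tau)$, and regularity of $H$ on $N$ becomes the requirement that $\sigma\mapsto\eta_\sigma$ be a bijection $T\to N$. This is where the substance lies.

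The crux is the following rigidity argument, which invokes Schreier's conjecture (a consequence of the classification of finite simple groups): $\Aut(N)/\Inn(N)$ is solvable for every finite group $N$. Since $T\simeq G$ is non-abelian simple, the composition $T\hookrightarrow\Aut(N)\twoheadrightarrow\Aut(N)/\Inn(N)$ must have trivial image, so $T\subseteq\Inn(N)$. Combined with $|T|=|N|$ and $|\Inn(N)|\le|N|$, this forces $Z(N)=1$, $T=\Inn(N)$, and hence $N\simeq T\simeq G$. Consequently this case is vacuous unless $N\simeq G$, yielding $e(G,N)=0$ for every $N\not\simeq G$.

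It remains to treat the cocycle case with $N\simeq G$, where $T=\Inn(G)$. Parametrising $\conj_g\in\Inn(G)$ by $g\in G$ and setting $\bar\eta(g):=\eta_{\conj_g}\cdot g$, a direct computation converts the cocycle identity into the statement that $\bar\eta\colon G\to G$ is a \emph{homomorphism}. Simplicity forces $\bar\eta$ to be either trivial or an automorphism $\psi$. The trivial choice gives $\eta_{\conj_g}=g^{-1}$ and hence $H=\lambda(G)$. An automorphism $\psi$ produces a bijective cocycle precisely when $\psi$ is fixed-point-free, in which case $G\rtimes\langle\psi\rangle$ would be a Frobenius group with kernel $G$, forcing $G$ to be nilpotent by Thompson's theorem; this contradicts non-abelian simplicity. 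Thus the cocycle case contributes only $\lambda(G)$, and together with $\rho(G)$ from the first case we obtain $e(G,G)=2$. The main obstacle throughout is the appeal to Schreier's conjecture, without which one would be forced into a case-by-case verification across the non-abelian finite simple groups.
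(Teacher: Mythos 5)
Your reduction to regular subgroups of $\Hol(N)$, the dichotomy via $H\cap\rho(N)\trianglelefteq H$, and the cocycle reformulation are all correct. The fatal step is your claim that ``$\Aut(N)/\Inn(N)$ is solvable for every finite group $N$.'' The Schreier conjecture (Lemma~\ref{Out lem}) asserts this only for finite \emph{simple} groups; it is false for general $N$. For a concrete counterexample at the relevant order, take $G=\PSL_2(7)\simeq\GL_3(2)$ of order $168$ and $N=(\bZ/2\bZ)^3\times\bZ/21\bZ$: here $\Inn(N)=1$ while $\Out(N)=\Aut(N)$ contains $\GL_3(2)\simeq G$, so your composition $T\hookrightarrow\Aut(N)\twoheadrightarrow\Out(N)$ can be injective and the conclusion $T\subseteq\Inn(N)$ fails. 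Consequently the half of the theorem asserting $e(G,N)=0$ for all $N\not\simeq G$ --- which is the hard half, due to Byott --- is not established by your argument. Ruling out such $N$ is exactly what forces the heavy machinery mirrored in Section~4 of this paper: passing to a maximal characteristic subgroup $M$ of $N$, Guralnick's classification of subgroups of prime-power index in simple groups (Lemma~\ref{prime power index}), lower bounds for representation degrees of $\SL_n(q)$, Burnside's theorem, and the Fisman--Arad factorization theorem (Lemma~\ref{factorization}). There is no shortcut via Schreier applied to $N$ itself; Schreier is only legitimately applied to the simple composition factors of $N/M$, as in Lemma~\ref{Inn lem}.

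Your treatment of the case $N\simeq G$ is essentially the standard one (there Schreier \emph{is} correctly applicable, since $N$ is simple, and the passage from the cocycle to the homomorphism $\bar\eta$ is the same device as Proposition~\ref{h prop}), but it contains a secondary, repairable slip: a fixed-point-free automorphism $\psi$ makes $G\rtimes\langle\psi\rangle$ a Frobenius group only if \emph{every} non-trivial power of $\psi$ is fixed-point-free, which you have not verified, so Thompson's theorem does not apply as stated. The clean fix is to invoke directly the CFSG consequence recorded as Lemma~\ref{fpf lem}: every automorphism of a finite non-abelian simple group has a non-trivial fixed point. That immediately excludes $\bar\eta\neq\mathrm{id}$ being realized by a bijective cocycle and leaves only $\lambda(G)$ and $\rho(G)$, giving $e(G,G)=2$.
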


It is natural to ask whether Theorem~\ref{simple thm} may be generalized to other non-abelian groups $G$ which are close to being simple. Recall that $G$ is said to be \emph{quasisimple} if $G=[G,G]$ and $G/Z(G)$ is a simple group, where $[G,G]$ is the commutator subgroup and $Z(G)$ is the center of $G$. In \cite[Theorem 1.3]{Tsang HG}, the author has already shown that:

\begin{thm}\label{QS thm}If $G$ is a finite quasisimple group, then $e(G,G)=2$.
\end{thm}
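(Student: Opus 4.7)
The plan is to show that the only regular subgroups of $\Hol(G)$ isomorphic to $G$ are $\lambda(G)$ and $\rho(G)$. These are distinct because $G$ is non-abelian, so specializing the formula for $e(G,N)$ to $N=G$ (which makes the ratio of $|\Aut|$'s equal to $1$) will yield $e(G,G)=2$.

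Let $H\leq\Hol(G)$ be such a regular subgroup and set $M := H\cap\rho(G)$. Since $\rho(G)\trianglelefteq\Hol(G)$, $M$ is normal in $H$; and since $H\simeq G$ is quasisimple, either $M=H$ (whence $H=\rho(G)$ by cardinality) or $M\leq Z(H)$. I assume the latter and reduce to the simple group $\bar G := G/Z(G)$. Because $G$ is perfect, any $\alpha\in\Aut(G)$ acting trivially on $\bar G$ defines a homomorphism $G\to Z(G)$, which must vanish; so the restriction $\Aut(G)\hookrightarrow\Aut(\bar G)$ is injective, and the natural projection $p\colon\Hol(G)\to\Hol(\bar G)$, $\rho(g)\alpha\mapsto\rho_{\bar G}(\bar g)\bar\alpha$, has kernel exactly $\rho(Z(G))$.

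The key claim is $M=\rho(Z(G))$. Granting it, $\bar H := p(H)$ has order $|\bar G|$, acts transitively on $\bar G$ (hence regularly), and is isomorphic to the non-abelian simple group $\bar G$. Applying Theorem~\ref{simple thm} to $\bar G$ then gives $\bar H\in\{\rho_{\bar G}(\bar G),\lambda_{\bar G}(\bar G)\}$. Since $\ker p = \rho(Z(G)) = M \subseteq H$ and $|H|=|G|=|p^{-1}(\bar H)|$, we conclude $H=p^{-1}(\bar H)$; a direct verification identifies $p^{-1}(\rho_{\bar G}(\bar G))=\rho(G)$ and $p^{-1}(\lambda_{\bar G}(\bar G))=\lambda(G)$, yielding the desired dichotomy.

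The main obstacle is thus establishing the key claim. For the inclusion $M\subseteq\rho(Z(G))$, write $M=\rho(M')$; centrality of each $\rho(m')\in M$ in $H$ forces $\alpha(m')=a^{-1}m'a$ for every $\rho(a)\alpha\in H$, and these constraints package into a group homomorphism $\phi\colon H\to\Aut(G)$, $\rho(a)\alpha\mapsto\conj_a\circ\alpha$, whose kernel $H\cap\lambda(G)$ is again central in $H$ (since $\lambda(G)$ too is normal in $\Hol(G)$). The image $\phi(H)$ fixes $M'$ pointwise, and size considerations together with quasisimplicity force $\phi(H)\supseteq\Inn(G)$, whence $M'\leq\mathrm{Fix}(\Inn(G))=Z(G)$. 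For the reverse inclusion $\rho(Z(G))\subseteq H$, one must exclude $M\subsetneq\rho(Z(G))$: in that event the projection $\Hol(G)\to\Aut(G)$ restricts to an injection of $H/M$ into $\Aut(G)\hookrightarrow\Aut(\bar G)$, exhibiting a perfect central extension of $\bar G$ strictly larger than $\bar G$ inside $\Aut(\bar G)$; this contradicts the structure of $\Aut(\bar G)$ via the Schreier conjecture (giving $\Out(\bar G)$ solvable) and CFSG-based bounds on the Schur multiplier of $\bar G$.
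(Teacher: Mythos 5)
The paper does not prove Theorem~\ref{QS thm} itself; it imports it from \cite[Theorem 1.3]{Tsang HG}, and its internal machinery for the companion statement (Section 4, via Propositions~\ref{thm1} and~\ref{thm2}) runs through bijective crossed homomorphisms, maximal characteristic subgroups, Szep's conjecture and the fixed-point lemma. Your argument is a genuinely different and, for the case $N=G$, more economical route: you intersect the regular subgroup $H$ with the two normal subgroups $\rho(G)$ and $\lambda(G)$ of $\Hol(G)$, use quasisimplicity to place these intersections in $Z(H)$ (or else conclude $H=\rho(G)$ or $H=\lambda(G)$ outright), prove $H\cap\rho(G)=\rho(Z(G))$, and then push $H$ down to a regular subgroup of $\Hol(G/Z(G))$ isomorphic to the simple group $G/Z(G)$, where Theorem~\ref{simple thm} applies; the injectivity of $\Aut(G)\to\Aut(G/Z(G))$ (valid because $G$ is perfect and $Z(G)$ is central) lets you pull the answer back and identify the two preimages as $\rho(G)$ and $\lambda(G)$. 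This outsources essentially all of the CFSG input to the already-known simple case, at the cost of saying nothing about $N\not\simeq G$, which is the actual content of the present paper. The reduction is sound, but two steps deserve more care than you give them. First, ``size considerations together with quasisimplicity'' alone do not yield $\phi(H)\supseteq\Inn(G)$: you must first note that $\phi(H)$ is perfect and that $\Out(G)$ embeds in the solvable group $\Out(G/Z(G))$ (Lemma~\ref{Out lem}, the Schreier conjecture), so that $\phi(H)\subseteq\Inn(G)$; only then does $|\phi(H)|\geq |H|/|Z(H)|=|\Inn(G)|$ force equality. Second, your appeal to Schur-multiplier bounds in the last sentence is unnecessary: once $H/M$ is perfect and embeds in $\Aut(G/Z(G))$, Schreier already forces it into $\Inn(G/Z(G))$, so $|H|/|M|\leq |G/Z(G)|$, which together with the inclusion $M\subseteq\rho(Z(G))$ gives $M=\rho(Z(G))$.
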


It remains to consider the groups $N\not\simeq G$. In \cite[Theorem 1.6]{Tsang HG}, the author has shown that if $G$ is the double cover of $A_n$ with $n\geq 5$, then $e(G,N)=0$ for all groups $N\not\simeq G$ of order $n!$. We shall extend this result and prove:

\begin{thm}\label{thm}If $G$ is a finite quasisimple group, then $e(G,N)=0$ for all groups $N\not\simeq G$ of order $|G|$.
\end{thm}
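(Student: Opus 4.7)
By the formula recalled in the introduction, proving $e(G,N)=0$ is equivalent to showing that $\Hol(N)$ contains no regular subgroup isomorphic to $G$. I therefore fix a putative regular subgroup $H\leq\Hol(N)$ with $H\simeq G$, and aim to deduce $N\simeq G$. Writing $\pi:\Hol(N)\to\Aut(N)$ for the projection with kernel $\rho(N)$, and setting $K:=H\cap\rho(N)=\ker(\pi|_H)$, the normality of $K$ in $H$ together with the quasisimplicity of $G$ forces $K=H$ or $K\leq Z(H)\simeq Z(G)$. In the first case $H=\rho(N)$, whence $N\simeq H\simeq G$; so from now on I assume $K\leq Z(H)$. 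The quotient $H/K$ is then a non-trivial quasisimple image of $G$ embedded in $\Aut(N)$, so the simple group $S:=G/Z(G)$ appears as a subquotient of $\Aut(N)$.

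\textbf{Skew-brace reformulation.} To exploit this, I would write $H=\{\rho(n^{-1})\phi(n):n\in N\}$ in the usual way, with $\phi:N\to\Aut(N)$ the associated map. This puts a second group operation $\circ$ on $N$ with $(N,\circ)\simeq G$, and $\phi:(N,\circ)\to\Aut(N,\cdot)$ a homomorphism whose kernel $Z_0\subseteq N$ corresponds to $K$ (namely the set of $z\in N$ with $h_z=\rho(z^{-1})$). Using $K\leq Z(H)$, I claim that $Z_0$ is simultaneously a subgroup of $(N,\cdot)$ and of $(N,\circ)$, that the two inverses agree on $Z_0$, and that for $z\in Z_0$ and $n\in N$, $\phi(n)(z)=nzn^{-1}$ in $(N,\cdot)$. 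From these identities I plan to show that $Z_0$ is normal in $(N,\cdot)$ and that the induced action of $H/K$ on the quotient $N/Z_0$ is regular and realized inside $\Hol(N/Z_0)$.

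\textbf{Cascading to the simple case.} With a regular copy of $G/Z_0$ sitting inside $\Hol(N/Z_0)$, I would next quotient one more level by the residual central part (if $Z_0\subsetneq Z(G)$) to produce a regular embedding of the simple group $S$ into $\Hol$ of the corresponding quotient of $N$. Theorem~\ref{simple thm} then forces that quotient to be isomorphic to $S$, and combining with $|N|=|G|=|Z(G)|\cdot|S|$ gives $|Z_0|=|Z(G)|$; so $N$ is a central extension of $S$ by an abelian group of order $|Z(G)|$. The final step is to use the skew-brace compatibility between $(N,\cdot)$ and $(N,\circ)\simeq G$ to match the cocycle defining $N$ with the one defining $G$, yielding $N\simeq G$.

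\textbf{Main obstacle.} The hardest step will be this last identification. For a given simple $S$ and a given abelian group $Z$ of the right order, there are in general many non-isomorphic central extensions of $S$ by $Z$ --- the split extension $S\times Z$ always being one of them --- and in each case $\Aut(N)$ contains $S$, so the mere inclusion $S\hookrightarrow\Aut(N)$ is not enough to pin down $N$. The obstruction to $N\not\simeq G$ has to come from the interaction between the two operations $\cdot$ and $\circ$ in the brace, and extracting it will likely require a case analysis along the classification of finite quasisimple groups (one case for each simple $S$ together with each subgroup of its Schur multiplier), generalizing the approach of \cite[Theorem~1.6]{Tsang HG} for the double covers of $A_n$.
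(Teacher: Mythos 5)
Your opening reduction is sound: $K=H\cap\rho(N)$ is normal in $H$, so by quasisimplicity either $K=H$ (whence $H=\rho(N)$ and $N\simeq G$) or $K\leq Z(H)$, and the identities you assert about $Z_0=\ker\phi$ (that the two operations agree on it, and that centrality of $K$ forces $\phi(n)$ to act on $Z_0$ by conjugation, so that $Z_0$ is normal in $(N,\cdot)$ and $H/K$ acts regularly on $N/Z_0$ inside $\Hol(N/Z_0)$) can be verified. But the proposal has two genuine gaps, and they sit exactly where the difficulty of the theorem lives. First, the ``cascading'' step does not reach the simple group. The residual central part $Z(H)/K$ of $H/K$ need not meet $\rho(N/Z_0)$, so there is no further normal subgroup of $N/Z_0$ to quotient by; in the extreme but perfectly possible case $K=1$ (i.e.\ the homomorphism $\ff$ of Proposition~\ref{fg prop} is injective) one has $Z_0=1$ and the cascade never starts. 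You therefore cannot invoke Theorem~\ref{simple thm}, cannot conclude that $N$ surjects onto $S$, and in particular have no argument that $N$ is perfect --- which in the paper is a substantial separate step (Proposition~\ref{thm1}), requiring Guralnick's prime-power-index theorem (Lemma~\ref{prime power index}), lower bounds for degrees of representations of $\SL_n(q)$ (Proposition~\ref{SL prop}), and a machine verification for $|N|=336$.

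Second, you explicitly leave open the final identification of $N$ among the central extensions of $S$ by a group of order $|Z(G)|$, and as you yourself observe this cannot be extracted from the mere existence of an embedding $S\hookrightarrow\Aut(N)$, since $S\times Z$ is always a competing extension admitting such an embedding. This is not a routine loose end but the heart of the matter, and the paper's strategy is designed precisely to avoid it: instead of quotienting $N$ by $\ker\phi$, it quotients by a maximal characteristic subgroup $M$, introduces the second homomorphism $\fh(\sigma)=\conj(\fg(\sigma))\cdot\ff(\sigma)$ of Proposition~\ref{h prop}, and plays $\overline{\ff}$ against $\overline{\fh}$ using the Szep factorization theorem (Lemma~\ref{factorization}) and the fixed-point lemma for automorphisms of simple groups (Lemma~\ref{fpf lem}) to prove that one of $\ff$, $\fh$ is trivial. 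That forces $\mathcal G=\rho(N)$ or $\lambda(N)$ outright (Proposition~\ref{thm2}), so $N\simeq G$ with no classification of central extensions ever needed. Without a substitute for these two ingredients, your outline does not yield a proof.
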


In view of Theorems~\ref{QS thm} and~\ref{thm}, one might guess that (\ref{extreme}) is also true for say,  finite almost simple or non-abelian characteristically simple groups $G$. If $G=S_n$ with $n\geq 5$, however, then by \cite[Theorems 5 and 9]{Childs simple}, we have
\[ e(G,G)\neq 2\mbox{ and }e(G,N)\neq0\mbox{ for some }N\not\simeq G.\]
See \cite{Tsang AS1,Tsang AS} for generalizations to other finite almost simple groups $G$. If $G$ is a finite non-abelian characteristically simple group which is not simple, then $e(G,G)\neq 2$ by \cite{Tsang char simple}, but as far as the author knows, there is no investigation yet on whether there exists $N\not\simeq G$ such that $e(G,N)\neq0$.

\section{Crossed homomorphisms}

In this section, let $G$ and $\Gamma$ be finite groups, whose orders are not assumed to be equal. Given $\ff\in\Hom(G,\Aut(\Gamma))$, recall that a \emph{crossed homomorphism} (with respect to $\ff$) is a map $\fg:G\longrightarrow\Gamma$ which satisfies
\[ \fg(\sigma\tau) = \fg(\sigma)\cdot \ff(\sigma)(\fg(\tau))\mbox{ for all }\sigma,\tau\in G.\]
Let $Z_\ff^1(G,\Gamma)$ be the set of all such maps $\fg$. The regular subgroups of $\Hol(\Gamma)$ isomorphic to $G$ may be parametrized by the bijective maps in $Z_\ff^1(G,\Gamma)$.

\begin{prop}\label{fg prop}The regular subgroups of $\Hol(\Gamma)$ isomorphic to $G$ are precisely the sets $\{\rho(\fg(\sigma))\cdot\ff(\sigma):\sigma\in G\}$, as $\ff$ ranges over $\Hom(G,\Aut(\Gamma))$ and $\fg$ over the bijective maps in $Z_\ff^1(G,\Gamma)$.
\end{prop}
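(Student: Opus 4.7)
\medskip
\noindent\textbf{Proof proposal.} The plan is to exploit the semidirect product decomposition $\Hol(\Gamma)=\rho(\Gamma)\rtimes\Aut(\Gamma)$. Every element of $\Hol(\Gamma)$ has a unique expression $\rho(\eta)\alpha$ with $\eta\in\Gamma$ and $\alpha\in\Aut(\Gamma)$, and (with $\alpha$ acting first in the product) such an element sends $1_\Gamma$ to $\alpha(1_\Gamma)\eta^{-1}=\eta^{-1}$. Consequently, a subgroup $H\leq\Hol(\Gamma)$ acts regularly on $\Gamma$ if and only if the $\rho(\Gamma)$-components of its elements run bijectively through $\Gamma$; in particular any regular $H$ automatically satisfies $|H|=|\Gamma|$. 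Throughout I will use the single commutation identity $\alpha\rho(\eta)=\rho(\alpha(\eta))\alpha$, which follows at once from $\alpha(x\eta^{-1})=\alpha(x)\alpha(\eta)^{-1}$.

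\medskip
For the forward direction, fix an isomorphism $\phi:G\to H$ and write $\phi(\sigma)=\rho(\fg(\sigma))\cdot\ff(\sigma)$ in the unique form above; this defines maps $\fg:G\to\Gamma$ and $\ff:G\to\Aut(\Gamma)$. The commutation identity then gives
\[ \phi(\sigma)\phi(\tau)=\rho(\fg(\sigma))\rho(\ff(\sigma)(\fg(\tau)))\ff(\sigma)\ff(\tau)=\rho\bigl(\fg(\sigma)\cdot\ff(\sigma)(\fg(\tau))\bigr)\ff(\sigma)\ff(\tau).\]
Comparing this with $\phi(\sigma\tau)=\rho(\fg(\sigma\tau))\ff(\sigma\tau)$ and invoking uniqueness of the semidirect product decomposition, I read off simultaneously that $\ff(\sigma\tau)=\ff(\sigma)\ff(\tau)$, so $\ff\in\Hom(G,\Aut(\Gamma))$, and that $\fg(\sigma\tau)=\fg(\sigma)\cdot\ff(\sigma)(\fg(\tau))$, so $\fg\in Z^1_\ff(G,\Gamma)$. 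Regularity of $H$, together with the observation of the first paragraph, then forces $\fg$ to be a bijection $G\to\Gamma$.

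\medskip
Conversely, given $\ff\in\Hom(G,\Aut(\Gamma))$ and a bijective $\fg\in Z^1_\ff(G,\Gamma)$, the identical calculation shows that $\sigma\mapsto\rho(\fg(\sigma))\ff(\sigma)$ is a group homomorphism $G\to\Hol(\Gamma)$ (the cocycle relation at $\sigma=\tau=1_G$ yielding $\fg(1_G)=1_\Gamma$, so the identity is hit); bijectivity of $\fg$ makes this homomorphism injective, whence its image is a subgroup of $\Hol(\Gamma)$ isomorphic to $G$, and by the first paragraph it is regular. The only real subtlety is pinning down the semidirect product conventions — in particular the direction in which $\Aut(\Gamma)$ acts on $\rho(\Gamma)$ — so that the commutation identity delivers $\ff(\sigma)(\fg(\tau))$ in the right position to match the crossed-homomorphism law. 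Once that is fixed, nothing of substance remains, and I do not anticipate any significant obstacle.
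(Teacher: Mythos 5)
Your proof is correct and is essentially the standard direct verification that the paper itself omits (it simply defers to \cite[Proposition 2.1]{Tsang HG}): the same use of the unique decomposition $\rho(\eta)\alpha$ in $\Hol(\Gamma)=\rho(\Gamma)\rtimes\Aut(\Gamma)$, the commutation rule $\alpha\rho(\eta)=\rho(\alpha(\eta))\alpha$, and the identification of regularity with bijectivity of $h\mapsto h(1_\Gamma)$. All the individual steps check out, so nothing needs to be added.
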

\begin{proof}The proof is straightforward; see \cite[Proposition 2.1]{Tsang HG}.\end{proof}

Hence, when $\Gamma$ has order $|G|$, that $e(G,\Gamma)$ is non-zero is equivalent to the existence of a bijective map $\fg\in Z_\ff^1(G,\Gamma)$ for some $\ff\in\Hom(G,\Aut(\Gamma))$. Let us give two approaches to study these crossed homomorphisms. The first is to define another $\fh\in\Hom(G,\Aut(\Gamma))$. The idea originates from \cite{Childs simple} and was formalized by the author in \cite[Proposition 3.4]{Tsang AS1} or \cite[Proposition 2.3]{Tsang AS}. The second is to use \emph{characteristic} subgroups of $\Gamma$, that is, subgroups $\Lambda$ such that $\varphi(\Lambda) = \Lambda$ for all $\varphi\in \Aut(\Gamma)$. 
The idea comes from \cite{Byott simple} and was restated in terms of crossed homomorphisms by the author in \cite[Lemma 4.1]{Tsang HG}. 

\begin{prop}\label{h prop}Let $\ff\in\Hom(G,\Aut(\Gamma))$ and $\fg\in Z_\ff^1(G,\Gamma)$. Define
\[ \fh : G\longrightarrow \Aut(\Gamma);\hspace{1em} \fh(\sigma) = \conj(\fg(\sigma))\cdot\ff(\sigma),\]
where $\conj(\cdot)=\lambda(\cdot)\rho(\cdot)$. Then:
\begin{enumerate}[(a)]
\item We have $\fh\in\Hom(G,\Aut(\Gamma))$.
\item For any $\sigma\in G$, we have $\ff(\sigma) = \fh(\sigma)$ if and only if $\sigma\in \fg^{-1}(Z(\Gamma))$.
\item The map $\sigma\mapsto\fg(\sigma)$ defines a homomorphism $\ker(\ff)\longrightarrow \Gamma$.
\item The map $\sigma\mapsto\fg(\sigma)^{-1}$ defines a homomorphism $\ker(\fh)\longrightarrow\Gamma$.
\end{enumerate}
\end{prop}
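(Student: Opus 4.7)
The plan is to derive all four parts by direct computation from the cocycle relation $\fg(\sigma\tau)=\fg(\sigma)\cdot\ff(\sigma)(\fg(\tau))$, together with the elementary naturality identity
\[ \alpha\circ\conj(\eta)\circ\alpha^{-1}=\conj(\alpha(\eta)) \]
for any $\alpha\in\Aut(\Gamma)$ and $\eta\in\Gamma$. This latter identity is the only nontrivial ingredient, and it is what allows the conjugation factor $\conj(\fg(\tau))$ to be commuted past $\ff(\sigma)$ so that the cocycle condition can be applied.

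For part (a), each $\conj(\fg(\sigma))$ is inner and therefore an automorphism, so $\fh(\sigma)\in\Aut(\Gamma)$ is clear. To check multiplicativity, I would expand
$\fh(\sigma)\fh(\tau)=\conj(\fg(\sigma))\,\ff(\sigma)\,\conj(\fg(\tau))\,\ff(\tau)$
and use the naturality identity with $\alpha=\ff(\sigma)$ to rewrite the middle as $\conj(\ff(\sigma)(\fg(\tau)))\,\ff(\sigma)$. The expression then collapses to $\conj\bigl(\fg(\sigma)\cdot\ff(\sigma)(\fg(\tau))\bigr)\,\ff(\sigma\tau)$, and the cocycle relation identifies the conjugated element as $\fg(\sigma\tau)$, yielding $\fh(\sigma\tau)$.

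Part (b) is immediate: $\ff(\sigma)=\fh(\sigma)$ is equivalent to $\conj(\fg(\sigma))$ being the identity automorphism, which holds precisely when $\fg(\sigma)\in Z(\Gamma)$. For part (c), if $\sigma\in\ker(\ff)$ then $\ff(\sigma)$ is the identity, so the cocycle relation reduces to $\fg(\sigma\tau)=\fg(\sigma)\fg(\tau)$ for all $\sigma,\tau\in\ker(\ff)$. For part (d), I would observe that $\sigma\in\ker(\fh)$ forces $\ff(\sigma)=\conj(\fg(\sigma))^{-1}=\conj(\fg(\sigma)^{-1})$; substituting this into the cocycle relation, for $\sigma,\tau\in\ker(\fh)$ one computes
\[ \fg(\sigma\tau)=\fg(\sigma)\cdot\fg(\sigma)^{-1}\fg(\tau)\fg(\sigma)=\fg(\tau)\fg(\sigma), \]
whence $\fg(\sigma\tau)^{-1}=\fg(\sigma)^{-1}\fg(\tau)^{-1}$, as required.

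There is no real obstacle in this proposition; the only point demanding care is keeping the conventions straight, since $\conj=\lambda\rho$ under the conventions of the excerpt really does give $\conj(\eta)(x)=\eta x\eta^{-1}$ (rather than its inverse), and this must be verified once at the outset so that all subsequent manipulations line up correctly.
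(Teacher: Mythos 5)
Your proof is correct and is exactly the direct computation the paper has in mind when it defers to \cite{Tsang AS1,Tsang AS} and calls the verification ``straightforward'': the naturality identity $\alpha\circ\conj(\eta)\circ\alpha^{-1}=\conj(\alpha(\eta))$ combined with the cocycle relation handles (a), and (b)--(d) follow as you describe. Your care in confirming that $\conj=\lambda\rho$ really gives $\conj(\eta)(x)=\eta x\eta^{-1}$ under the paper's conventions is well placed.
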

\begin{proof}Part (a) appeared in \cite[Proposition 3.4]{Tsang AS1}, while parts (b) -- (d) were stated in \cite[Proposition 2.3]{Tsang AS}. The proofs are straightforward.
\end{proof}

\begin{prop}\label{char prop}Let $\ff\in\Hom(G,\Aut(\Gamma))$ and $\fg\in Z_\ff^1(G,\Gamma)$. Let $\Lambda$ be any characteristic subgroup of $\Gamma$. Consider the homomorphism
\[\begin{tikzcd}[column sep=3cm]
\overline{\ff}:G\arrow{r}{\ff} & \Aut(\Gamma) \arrow{r}{\varphi\mapsto(\gamma\Lambda\mapsto\varphi(\gamma)\Lambda)} & \Aut(\Gamma/\Lambda)
\end{tikzcd}\]
induced by $\ff$ and the map
\[\begin{tikzcd}[column sep=3cm]
\overline{\fg}:G\arrow{r}{\fg} & \Gamma \arrow{r}{\text{\tiny quotient map}} & \Gamma/\Lambda
\end{tikzcd}\]
induced by $\fg$. Then:
\begin{enumerate}[(a)]
\item We have $\overline{\fg}\in Z_{\overline{\ff}}^1(G,\Gamma/\Lambda)$.
\item The preimage $\fg^{-1}(\Lambda)$ is a subgroup of $G$.
\item In the case that $\fg$ is bijective, there is a regular subgroup of $\Hol(\Lambda)$ which is isomorphic to $\fg^{-1}(\Lambda)$.
\end{enumerate}
\end{prop}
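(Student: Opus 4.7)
The overall strategy is to exploit that $\Lambda$ being characteristic in $\Gamma$ gives two useful forms of invariance for free: first, every $\ff(\sigma)\in\Aut(\Gamma)$ descends to a well-defined automorphism of $\Gamma/\Lambda$, so $\overline{\ff}$ makes sense; and second, every $\ff(\sigma)$ restricts to an automorphism of $\Lambda$, which is what will let me restrict the whole crossed-homomorphism datum to a subgroup and then invoke Proposition~\ref{fg prop}.

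For part (a), the plan is to simply apply the quotient map to the identity $\fg(\sigma\tau)=\fg(\sigma)\cdot\ff(\sigma)(\fg(\tau))$ and observe that, by characteristicity, $\overline{\ff(\sigma)(\fg(\tau))}=\overline{\ff}(\sigma)(\overline{\fg}(\tau))$; no obstruction there. For part (b), I will first check $\fg(1_G)=1_\Gamma$ by plugging $\sigma=\tau=1_G$ into the cocycle identity, so $1_G\in\fg^{-1}(\Lambda)$. For closure, given $\sigma,\tau\in\fg^{-1}(\Lambda)$, both $\fg(\sigma)$ and $\ff(\sigma)(\fg(\tau))$ lie in $\Lambda$ (the latter since $\ff(\sigma)(\Lambda)=\Lambda$), so their product $\fg(\sigma\tau)$ is in $\Lambda$. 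For inverses, applying the cocycle identity to $\sigma\sigma^{-1}=1_G$ gives $\fg(\sigma^{-1})=\ff(\sigma)^{-1}(\fg(\sigma)^{-1})$, which again lies in $\Lambda$ by invariance. Alternatively, one may phrase (b) as saying $\fg^{-1}(\Lambda)$ equals the ``kernel'' $\overline{\fg}^{\,-1}(\overline{1})$ of the crossed homomorphism from (a), and run the same three checks there.

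For part (c), write $H=\fg^{-1}(\Lambda)$ and consider the restrictions $\ff|_H\colon H\to\Aut(\Gamma)$ and $\fg|_H\colon H\to\Gamma$. By characteristicity each $\ff(\sigma)$ preserves $\Lambda$, so I may define $\ff'\in\Hom(H,\Aut(\Lambda))$ by $\ff'(\sigma)=\ff(\sigma)|_\Lambda$, and $\fg|_H$ takes values in $\Lambda$ by definition of $H$. The cocycle identity for $\fg$ restricts verbatim, giving $\fg|_H\in Z_{\ff'}^1(H,\Lambda)$. The only thing to verify is that $\fg|_H\colon H\to\Lambda$ is bijective: injectivity is inherited from $\fg$, and surjectivity holds because any $\gamma\in\Lambda$ is $\fg(\sigma)$ for some $\sigma\in G$ (as $\fg$ is bijective on the nose), and such a $\sigma$ automatically lies in $H$. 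Feeding $(\ff',\fg|_H)$ into Proposition~\ref{fg prop} then produces a regular subgroup of $\Hol(\Lambda)$ isomorphic to $H$, as required. No step here looks like a real obstacle; the one place care is needed is matching the hypotheses of Proposition~\ref{fg prop} for $(H,\Lambda)$ in place of $(G,\Gamma)$, which is exactly what characteristicity buys.
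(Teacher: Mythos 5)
Your proof is correct and follows the same route the paper intends: parts (a) and (b) are the direct computations the paper calls ``clear,'' and for part (c) your restriction of $(\ff,\fg)$ to $(H,\Lambda)$ followed by an appeal to Proposition~\ref{fg prop} is exactly the argument of the cited reference \cite[Proposition 3.3]{Tsang solvable}. The one point needing care --- that $\fg|_H\colon H\to\Lambda$ is surjective because any preimage under the bijection $\fg$ of an element of $\Lambda$ lands in $H$ by definition --- is handled correctly.
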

\begin{proof}
Parts (a) and (b) are clear; see \cite[Proposition 4.1]{Tsang HG} for a proof of the latter. For part (c), see \cite[Proposition 3.3]{Tsang solvable}.
\end{proof}

Following \cite{Byott simple}, we shall take $\Lambda$ to be a maximal characteristically subgroup of $\Gamma$. Then, the quotient $\Gamma/\Lambda$ is a non-trivial characteristically simple group, and since $\Gamma$ is finite, we know that
\begin{equation}\label{N/M} 
\Gamma/\Lambda\simeq T^m,\mbox{ where $T$ is a simple group and }m\in\bN,
\end{equation}
in which case the structure of $\Aut(\Gamma/\Lambda)$ is well-known. This approach turns out to be very useful and was crucial in all of \cite{Tsang HG,Tsang Sn, Tsang AS, Tsang solvable}.

\section{Consequences of CFSG}

In this section, let $A$ be a finite non-abelian simple group. We shall require some consequences of the classification of finite simple groups (CFSG).

\vspace{1mm}

One difficulty in dealing with finite quasisimple groups, as opposed to non-abelian simple groups, is that they need not be centerless. But their center is a quotient of the Schur multiplier of the associated finite non-abelian simple group; see \cite[Section 33]{A book} for more on Schur multipliers.

\vspace{1mm}

Let $\mathfrak{m}(A)$ denote the order of the Schur multiplier of $A$. We shall say that $\PSL_n(q)$ and $\PSU_n(q)$, respectively, are \emph{non-exceptional} if
\begin{equation}\label{non except} \mathfrak{m}(\PSL_n(q)) = \gcd(n,q-1)\mbox{ and }\mathfrak{m}(\PSU_n(q)) = \gcd(n,q+1).\end{equation}
As a consequence of CFSG, we know that $\mathfrak{m}(A)$ is small, in fact at most $12$, except when $A\simeq\PSL_n(q),\PSU_n(q)$. More specifically:

\begin{lem}\label{Schur multiplier lem} Let $\mathfrak{M}=\{1,2,3,4,6,12\}$. 
\begin{enumerate}[(a)]
\item If $A\not\simeq \PSL_n(q),\PSU_n(q)$, then $\mathfrak{m}(A)\in\mathfrak{M}$. 
\item If $A=\PSL_n(q),\PSU_n(q)$, then $\mathfrak{m}(A)\in\mathfrak{M}$ or $A$ is non-exceptional, except that $\mathfrak{m}(\PSL_3(4)) = 48$ and $\mathfrak{m}(\PSU_4(3))=36$.
\end{enumerate}
\end{lem}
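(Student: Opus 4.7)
The plan is to invoke CFSG and perform a case-by-case verification using the standard tabulation of Schur multipliers (for instance as compiled in \cite{A book} or the ATLAS). By CFSG every finite non-abelian simple group $A$ falls into one of three families: alternating groups $A_n$ with $n\geq 5$, one of the 26 sporadic groups, or a group of Lie type. I would dispose of each family in turn, treating (a) and (b) simultaneously since the division by group type does not coincide with the division into $\PSL_n(q),\PSU_n(q)$ versus the rest.

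For the alternating groups, the well-known computation gives $\mathfrak{m}(A_n)=2$ for all $n\geq 5$ with the sole exceptions $\mathfrak{m}(A_6)=\mathfrak{m}(A_7)=6$; in particular $\mathfrak{m}(A_n)\in\mathfrak{M}$. For the sporadic groups, a direct inspection of the list of 26 Schur multipliers shows that every one of them divides 12, so again $\mathfrak{m}(A)\in\mathfrak{M}$. Neither the alternating nor the sporadic groups involve $\PSL_n(q)$ or $\PSU_n(q)$ (keeping in mind the sporadic isomorphisms such as $A_5\simeq \PSL_2(4)\simeq \PSL_2(5)$ and $A_8\simeq \PSL_4(2)$, which are also covered), so part (a) is settled for these families.

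For groups of Lie type, the Schur multiplier is described as a product of a \emph{generic} (or \emph{canonical}) part, depending only on the root datum and the Frobenius twist, and an \emph{exceptional} part, which is trivial outside an explicit finite list of small parameter combinations. For types different from $A_n$ and ${}^2A_n$, the generic multiplier already lies in $\mathfrak{M}$ (it is usually $1$, $2$, or $3$, and at worst divides $4$ for $D_n$), and the finitely many exceptional cases are known to have multiplier in $\mathfrak{M}$ as well; this establishes (a) for these groups. For $\PSL_n(q)$ and $\PSU_n(q)$, the generic multiplier is exactly $\gcd(n,q-1)$ and $\gcd(n,q+1)$, which is the definition of non-exceptional in (\ref{non except}); the list of exceptional cases is short and can be inspected directly, yielding multipliers in $\mathfrak{M}$ in every case other than the two outliers $\PSL_3(4)$ and $\PSU_4(3)$.

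The only real difficulty is organizational: one must be confident that the table of exceptional multipliers of Lie type groups is complete and that no small case has been overlooked, especially among the twisted groups like ${}^2B_2(q)$, ${}^2G_2(q)$, ${}^2F_4(q)$, ${}^3D_4(q)$ where exceptional multipliers can arise at the smallest $q$. I would therefore cite a single comprehensive reference for the multipliers of Lie type groups (such as \cite[Section 33]{A book} together with the tables therein) rather than re-derive them; this makes the argument essentially a tabulation check, which is exactly the spirit in which the lemma is stated.
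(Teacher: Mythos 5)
Your proposal is correct and is in essence what the paper does: the paper's entire proof is a citation to the tabulation of Schur multipliers of the finite simple groups (Kleidman--Liebeck, Theorem 5.1.4), and your case-by-case reading of that table (alternating, sporadic, Lie type with generic versus exceptional part) is exactly the verification being delegated to the reference. The only difference is presentational: you spell out the check that the paper compresses into one line.
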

\begin{proof}See \cite[Theorem 5.1.4]{groups book}.
\end{proof}

\begin{lem}\label{Out lem} The outer automorphism group $\Out(A)$ of $A$ is solvable.
\end{lem}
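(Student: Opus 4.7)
The statement is the classical Schreier conjecture, which is known to follow from CFSG but has no uniform proof. The plan is therefore to invoke the classification and verify solvability of $\Out(A)$ family by family, quoting standard references rather than working the details by hand.

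For the alternating groups $A_n$ with $n\geq 5$, one has $\Out(A_n)\simeq \bZ/2\bZ$ except that $\Out(A_6)\simeq \bZ/2\bZ\times\bZ/2\bZ$; in either case the group is abelian. For the $26$ sporadic simple groups together with the Tits group, direct inspection (e.g.\ via the ATLAS) shows that $\Out(A)$ has order $1$ or $2$, hence is again abelian. So in these two regimes solvability is immediate.

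The main content is the groups of Lie type, where one uses the standard decomposition of $\Out(A)$ as a successive extension built out of three pieces: a group $D$ of diagonal automorphisms, a group $F$ of field automorphisms, and a group $\Gamma$ of graph automorphisms. The diagonal part $D$ is abelian, with order dividing some expression like $\gcd(n,q\pm 1)$; the field part $F$ is cyclic, generated by the Frobenius over the prime field; and the graph part $\Gamma$ embeds into the symmetry group of the Dynkin diagram, which is at worst $S_3$ in type $D_4$ and is $S_2$ or trivial otherwise. Each of these pieces is thus solvable, and since solvability is preserved under extensions, $\Out(A)$ is solvable in the Lie-type case too.

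The only real obstacle is that the argument is intrinsically case-by-case via CFSG; there is no conceptual shortcut. In practice I would simply cite the explicit description of $\Out(A)$ collected in \cite[Theorem 2.5.12]{groups book}, which packages the alternating, sporadic, and Lie-type cases into a single reference and confirms that $\Out(A)$ is solvable in every case.
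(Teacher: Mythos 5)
Your proposal is correct and takes essentially the same route as the paper, which simply records that this is the Schreier conjecture and cites \cite[Theorem~1.46]{G book}; your additional case-by-case sketch (abelian $\Out$ for alternating and sporadic groups, and the diagonal--field--graph decomposition for Lie type) is the standard justification behind that citation. The only quibble is bibliographic: the reference you guess, \cite{groups book}, is Kleidman--Liebeck, whereas the paper sources the result from Gorenstein's book.
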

\begin{proof}This is known as Schreier conjecture; see \cite[Theorem 1.46]{G book}.
\end{proof}

\begin{lem}\label{fpf lem} Every $\varphi\in\Aut(A)$ has a fixed point other than $1_A$.
\end{lem}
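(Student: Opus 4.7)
The plan is to argue by contradiction, combined with a case split according to whether $\varphi$ lies in $\Inn(A)$ or not.

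First I would dispose of the inner case. Since $A$ is non-abelian simple, it is centerless, so $\Inn(A) \simeq A$ and every inner automorphism is uniquely of the form $\conj(g)$ for some $g \in A$. If $g = 1_A$, then $\varphi$ is the identity and every non-identity element of $A$ is a fixed point (and $A \neq \{1_A\}$). If $g \neq 1_A$, then $\varphi(g) = g g g^{-1} = g$ already exhibits $g$ as a nontrivial fixed point of $\varphi$.

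The substantive case is when $\varphi \in \Aut(A) \setminus \Inn(A)$. I would argue by contradiction: suppose $\varphi$ fixes only $1_A$, so that $\varphi$ is a \emph{fixed-point-free} automorphism of $A$. The plan is then to invoke a classical CFSG-based theorem of Rowley, which asserts that any finite group admitting a fixed-point-free automorphism must be solvable. Since $A$ is non-abelian simple, it is non-solvable, producing the desired contradiction.

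The main obstacle is the outer case, which genuinely requires the full strength of CFSG. When $\varphi$ has prime order, Thompson's 1959 theorem (every finite group with a fixed-point-free automorphism of prime order is nilpotent) would already suffice without CFSG, but for $\varphi$ of general order the classification is needed in an essential way. A more self-contained alternative would be a case-by-case analysis of $\Out(A)$ via the CFSG-based structure theory of outer automorphism groups of finite simple groups (handling diagonal, field, and graph automorphisms of the groups of Lie type, plus the sporadic and alternating cases separately), exhibiting an explicit fixed element in each case; this route is much more laborious, and citing the Rowley-style fixed-point-free theorem seems to be the cleanest approach.
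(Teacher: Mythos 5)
Your proposal is correct and in substance identical to the paper's proof, which simply cites the CFSG consequence that no finite simple group admits a fixed-point-free automorphism (Gorenstein, \emph{Finite simple groups}, Theorem 1.48) --- the Rowley-style solvability theorem you invoke is an equivalent packaging of the same classification-dependent fact, and your separate treatment of the inner case, while harmless, is already subsumed by that citation.
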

\begin{proof}See \cite[Theorem 1.48]{G book}.\end{proof}

\begin{lem}\label{factorization}There do not exist subgroups $B_1$ and $B_2$ of $A$ such that both of them have non-trivial center and $A = B_1B_2$.
\end{lem}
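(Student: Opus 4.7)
The plan is to pass from $B_1,B_2$ to the centralizers $C_A(z_1),C_A(z_2)$ of elements $z_i\in Z(B_i)\setminus\{1\}$, and then derive a contradiction from a short conjugacy-class counting argument. Since $z_i\in Z(B_i)$, we have $B_i\leq C_A(z_i)$, so the factorization $A=B_1B_2$ forces $A=C_A(z_1)C_A(z_2)$. It therefore suffices to show that no pair of non-identity elements $z_1,z_2\in A$ satisfies $A=C_A(z_1)C_A(z_2)$.

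Assume for contradiction that such $z_1,z_2$ exist. The product formula $|A|\cdot|C_A(z_1)\cap C_A(z_2)|=|C_A(z_1)|\cdot|C_A(z_2)|$ rearranges to
\[ [C_A(z_2):C_A(z_1)\cap C_A(z_2)] \;=\; [A:C_A(z_1)] \;=\; |z_1^A|. \]
Since $C_A(z_1)\cap C_A(z_2)=C_{C_A(z_2)}(z_1)$, the left-hand side equals $|z_1^{C_A(z_2)}|$, the size of the $C_A(z_2)$-orbit of $z_1$ under conjugation. As $z_1^{C_A(z_2)}\subseteq z_1^A$ and these finite sets have equal cardinality, they coincide; equivalently, every $A$-conjugate of $z_1$ is of the form $cz_1c^{-1}$ with $c\in C_A(z_2)$.

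For any such $c$, the element $cz_1c^{-1}$ commutes with $cz_2c^{-1}=z_2$. Hence $z_2$ centralizes every element of $z_1^A$, and therefore centralizes the normal closure $\langle z_1^A\rangle$. Since $A$ is simple and $z_1\neq 1$, this normal closure is all of $A$, so $z_2\in Z(A)=1$, contradicting $z_2\neq 1$.

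Notably, this argument uses only that $A$ is centerless and has no proper non-trivial normal subgroups; no CFSG input and no classification of maximal factorizations is required. There does not seem to be a real obstacle: the only conceptual step is the reduction to centralizers, after which the conjugacy-class equality $z_1^A=z_1^{C_A(z_2)}$ does all the work. I would double-check only that nothing is lost in replacing $B_i$ by $C_A(z_i)$, which is immediate from the inclusion $B_1B_2\subseteq C_A(z_1)C_A(z_2)$.
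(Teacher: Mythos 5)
The reduction from $B_1,B_2$ to the centralizers is fine, and the counting argument giving $z_1^{C_A(z_2)}=z_1^A$ is also correct. The proof breaks at the sentence ``for any such $c$, the element $cz_1c^{-1}$ commutes with $cz_2c^{-1}=z_2$.'' Conjugating by $c$ shows that $cz_1c^{-1}$ commutes with $cz_2c^{-1}$ only if $z_1$ already commutes with $z_2$, and nothing in your hypotheses gives that: $z_1\in Z(B_1)$ and $z_2\in Z(B_2)$ live in different subgroups and need not commute with each other. Here is a small example in which every step up to that point goes through but the claim fails (the steps so far use nothing about simplicity): take $A=S_3$, $z_1=(12)$, $z_2=(123)$, so $B_1=\langle(12)\rangle$, $B_2=\langle(123)\rangle$ and $A=C_A(z_1)C_A(z_2)$. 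One checks that $z_1^{C_A(z_2)}=z_1^A=\{(12),(13),(23)\}$, exactly as your orbit count predicts, yet $z_2=(123)$ centralizes none of these elements. So the conclusion that $z_2$ centralizes $\langle z_1^A\rangle$ does not follow, and the contradiction evaporates.

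This is not a repairable slip. The statement you are trying to prove elementarily is precisely Szep's conjecture --- that a finite non-abelian simple group is never the product of the centralizers of two non-identity elements --- and your reduction to $A=C_A(z_1)C_A(z_2)$ is exactly the standard formulation of it. That conjecture was open for a long time and was settled by Fisman and Arad using the classification of finite simple groups; the paper's proof of this lemma is simply a citation to their theorem. Your instinct that ``the conjugacy-class equality does all the work'' is where the difficulty is hiding: extracting a contradiction from $z_1^{C_A(z_2)}=z_1^A$ is the genuinely deep part, and the $S_3$ computation above shows that no argument valid for arbitrary groups can do it, so simplicity (in practice, CFSG-level information) must enter in an essential way after this point.
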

\begin{proof}This was a conjecture of Szep and was proven in \cite{Szep conj}.
\end{proof}

\begin{lem}\label{prime power index}Suppose that $A$ has a subgroup of index $p^a$, where $p$ is a prime and $a\in\bN$. Then, one of the following holds:
\begin{enumerate}[(a)]
\item $A\simeq A_{p^a}$ with $p^a\geq 5$;
\item $A\simeq\PSL_n(q)$ with $p^a = (q^n-1)/(q-1)$;
\item $A\simeq\PSL_2(11)$ with $p^a=11$;
\item $A\simeq M_{11}$ with $p^a=11$, or $A\simeq M_{23}$ with $p^a=23$;
\item $A\simeq\PSU_4(2)$ with $p^a=27$.
\end{enumerate}
\end{lem}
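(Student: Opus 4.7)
The plan is to invoke the classification of finite simple groups (CFSG) and examine each family in turn, following the strategy of Guralnick. The basic setup is as follows: if $B\leq A$ has index $p^a$ and $P$ is a Sylow $p$-subgroup of $A$, then $[A:BP]$ is simultaneously a divisor of $[A:B]=p^a$ and coprime to $p$, forcing $A=BP$. Hence $P$ acts transitively on the coset space $A/B$, and in particular $|P|\geq p^a$. Replacing $B$ by a maximal overgroup and passing to the quotient by the (necessarily trivial) core, we may further assume that the induced action $A\hookrightarrow\Perm(A/B)$ is faithful and primitive of degree $p^a$.

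From here the task is to enumerate all faithful primitive actions of a finite non-abelian simple group of prime-power degree. Using CFSG, I would partition $A$ into the four standard classes: alternating, classical Lie type, exceptional Lie type, and sporadic, and handle each class via its known list of maximal subgroups.

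For the alternating group $A_n$, the maximal subgroups are described by the O'Nan--Scott theorem; the intransitive and imprimitive ones have index a binomial or multinomial coefficient, and a direct divisibility argument shows that these can be prime powers only when $n$ itself is a prime power, giving case (a). The primitive maximals are handled via their known list and produce no further examples beyond accidental isomorphisms already accounted for in the other cases. For classical groups, the natural projective action of $\PSL_n(q)$ on $\mathbb{P}^{n-1}(\mathbb{F}_q)$ gives case (b) whenever $(q^n-1)/(q-1)$ is a prime power. To show no other examples arise, one runs through Aschbacher's classes of maximal subgroups and compares orders; small-rank exceptional isomorphisms then produce the extra sporadic-looking cases $\PSL_2(11)$ on $11$ points (case (c)) and $\PSU_4(2)$ on $27$ points (case (e)). For exceptional Lie type, one factors the order and uses that the root system is large enough that at least two distinct primes always appear in every subgroup index, ruling out prime-power indices entirely. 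For sporadic groups, a finite check via the ATLAS gives the $M_{11}$ and $M_{23}$ examples of case (d) and verifies that nothing else occurs.

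The main obstacle is the classical group analysis: Aschbacher's theorem yields a long list of maximal subgroup types, and for each one must compare the subgroup order with $|A|$ carefully to check whether the quotient is a prime power. This is especially delicate for small $n$ and $q$, where accidental isomorphisms and extraspecial-type or field-extension-type normalizers can produce exceptional prime-power indices that must be identified individually; the emergence of exactly $\PSU_4(2)$ on $27$ points and no further sporadic cases of Lie type is the most surprising combinatorial coincidence to pin down. Given the depth of the case analysis, in practice I would simply cite Guralnick's original theorem rather than reconstruct the argument from scratch.
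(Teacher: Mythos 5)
Your proposal is correct and ultimately takes the same route as the paper, which proves this lemma simply by citing Guralnick's theorem on subgroups of prime-power index in simple groups (reference \cite{RG}). Your sketch of the underlying CFSG case analysis is a faithful outline of how that theorem is actually proved, but since you (rightly) conclude by deferring to the citation, the two proofs coincide.
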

\begin{proof}See \cite[Theorem 1]{RG}.
\end{proof}

\section{Proof of Theorem~$\ref{thm}$}

In this section, let $G$ be a finite quasisimple group, and let $N$ be any group of order $|G|$. Suppose that $e(G,N)\neq0$, namely there is a regular subgroup $\mathcal{G}$ of $\Hol(N)$ isomorphic to $G$. In the next two subsections, we shall prove:

\begin{prop}\label{thm1}The group $N$ must be perfect, namely $N=[N,N]$. \end{prop}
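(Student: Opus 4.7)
Assume for contradiction that $N$ is not perfect, so the abelianisation $N/[N,N]$ is non-trivial. Pick a prime $p$ dividing $|N/[N,N]|$ and set $\Lambda := [N,N]\cdot N^{p}$, where $N^{p}$ is the (characteristic) subgroup generated by $p$-th powers. Then $\Lambda$ is a characteristic subgroup of $N$, and $V := N/\Lambda$ is an elementary abelian $p$-group of some positive rank $m$. By Proposition~\ref{char prop}, the induced map $\overline{\fg}\colon G \to V$ is a crossed homomorphism with respect to $\overline{\ff}\colon G \to \Aut(V) = \GL_{m}(\mathbb{F}_{p})$, and the bijectivity of $\fg$ forces $\overline{\fg}$ to be surjective. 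In particular, the subgroup $H := \fg^{-1}(\Lambda)$ of $G$ has index $[G:H]=p^{m}$.

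The first observation is that $\overline{\ff}$ cannot be trivial: if it were, $\overline{\fg}$ would be an ordinary group homomorphism from the perfect group $G$ to the abelian group $V$, hence zero, contradicting surjectivity. Since every proper normal subgroup of the quasisimple group $G$ lies in $Z(G)$, we must have $\ker(\overline{\ff}) \leq Z(G)$; thus $\overline{\ff}(G)$ is a non-trivial perfect subgroup of $\GL_{m}(\mathbb{F}_{p})$ admitting the non-abelian simple group $A := G/Z(G)$ as a quotient, and being perfect it sits inside $\SL_{m}(\mathbb{F}_{p})$. This yields the order bound $|A| \leq |\SL_{m}(\mathbb{F}_{p})|$. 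The same normal-core reasoning applied to $H$ shows that $\overline{H} := HZ(G)/Z(G)$ is a proper subgroup of $A$ of $p$-power index (else the core of $H$ would be all of $G$, forcing $p^{m}=1$), so by Lemma~\ref{prime power index}, $A$ lies on a short explicit list: $A_{p^{a}}$, $\PSL_{n}(q)$ with $p^{a} = (q^{n}-1)/(q-1)$, $\PSL_{2}(11)$, $M_{11}$, $M_{23}$, or $\PSU_{4}(2)$.

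The main obstacle, and the bulk of the work, is then to rule out each entry of this list. The plan is to combine four ingredients: the order bound $|A| \leq |\SL_{m}(\mathbb{F}_{p})|$ just derived; the tight restriction on $|Z(G)|$ via the Schur multiplier provided by Lemma~\ref{Schur multiplier lem}; the auxiliary homomorphism $\fh = \conj(\fg)\cdot\ff$ of Proposition~\ref{h prop}, whose parts (c) and (d) turn $\fg$ and $\sigma\mapsto\fg(\sigma)^{-1}$ into genuine homomorphisms on the central subgroups $\ker(\ff)$ and $\ker(\fh)$, enabling the use of Lemma~\ref{factorization} (Szep's conjecture) on factorisations of $A$ and Lemma~\ref{fpf lem} (fixed-point freeness of automorphisms of $A$); and direct arithmetic in the isolated sporadic and small $\PSL$ cases.

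I expect the alternating case $A \simeq A_{p^{a}}$ and the generic $\PSL_{n}(q)$ case to be the main difficulties, because there the order bound alone is loose (both families genuinely admit low-dimensional modular representations and transitive actions of $p$-power degree). In those cases one must exploit the finer cocycle structure, specifically that $\overline{\ff}(G) \leq \SL_{m}(\mathbb{F}_{p})$ acts on $V$ admitting a surjective $1$-cocycle whose level sets are precisely the left cosets of $H$ in $G$ (i.e.\ $G$ acts transitively and affinely on $V$ with point stabiliser $H \cap \ker(\ff)$ modulo centre), so as to upgrade the loose order comparison into a contradiction through Lemma~\ref{factorization} or Lemma~\ref{fpf lem}.
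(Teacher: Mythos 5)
Your setup matches the paper's: you pass to a characteristic subgroup with elementary abelian quotient $V\simeq(\bZ/p\bZ)^m$, observe that $\overline{\ff}$ must be non-trivial with kernel in $Z(G)$ (whence, as you note, $\overline{\ff}(G)\leq\SL_m(\mathbb{F}_p)$, which also forces $m\geq 2$), and invoke Lemma~\ref{prime power index} on $H=\fg^{-1}(\Lambda)$ to put $A=G/Z(G)$ on Guralnick's list. Up to that point you are on the right track. But everything after that is a plan rather than a proof, and you yourself flag it as ``the bulk of the work.'' That is a genuine gap, and moreover the plan you sketch is not the one that works. The paper does \emph{not} eliminate the entries of the list via Lemma~\ref{factorization} or Lemma~\ref{fpf lem} (those are the tools for the perfect-$N$ half of the argument, Proposition~\ref{thm2}). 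Instead it relies on three ingredients you do not mention: (i) a reduction using prior results --- the case $Z(G)=1$ is Byott's simple-group theorem and the case of the double cover of $A_n$ is already settled in \cite[Theorem 1.6]{Tsang HG} --- which is what disposes of the alternating case $A\simeq A_{p^a}$ you correctly identify as hard (the Schur multiplier of $A_{p^a}$ is $2$ except for $p^a=7$, so a non-trivial center forces the excluded double cover, and $p^a=7$ dies by $z=0$, $m=1$); (ii) for the surviving case $G\simeq\SL_n(q)$ with $p^{m-z}=(q^n-1)/(q-1)$, the cross-characteristic representation bound of Guralnick--Penttila--Praeger--Saxl (Proposition~\ref{SL prop}), applied to the non-trivial map $\SL_n(q)\to\GL_m(p)$, which pins $(n,q)$ down to $(2,7)$; and (iii) a \textsc{Magma} search over the insolvable groups $N$ of order $336$ to kill $\SL_2(7)$, which no purely theoretical step in the paper replaces.

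Your proposed order bound $|A|\leq|\SL_m(\mathbb{F}_p)|$ points in the right direction but is far too weak on its own: $|\SL_m(\mathbb{F}_p)|$ grows like $p^{m^2-1}$ while $|A|$ is only loosely tied to $p^m$, so the comparison does not close the generic $\PSL_n(q)$ case. The sharpening the paper uses is precisely the degree bound $d\geq (q^n-1)/(q-1)-2$ for faithful cross-characteristic representations, combined with the arithmetic $|Z(G)|=n=p$ and $z\in\{0,1\}$ coming from the Schur multiplier of $\PSL_n(q)$. Without Proposition~\ref{SL prop}, without the appeal to the already-known simple and double-cover cases, and without the terminal computation for $\SL_2(7)$, your argument does not reach a contradiction.
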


\begin{prop}\label{thm2}The regular subgroup $\mathcal{G}$ is either $\rho(N)$ or $\lambda(N)$.\end{prop}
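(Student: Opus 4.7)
The plan is to recast the conclusion as a triviality statement for certain homomorphisms. By Proposition~\ref{fg prop} we may write $\mathcal{G}=\{\rho(\fg(\sigma))\cdot\ff(\sigma):\sigma\in G\}$ for some $\ff\in\Hom(G,\Aut(N))$ and some bijective $\fg\in Z^1_\ff(G,N)$; let $\fh$ be the homomorphism produced from $(\ff,\fg)$ by Proposition~\ref{h prop}. If $\ff\equiv 1$, the cocycle identity collapses to $\fg(\sigma\tau)=\fg(\sigma)\fg(\tau)$, so $\fg$ is a group isomorphism $G\to N$ and $\mathcal{G}=\rho(N)$. If instead $\fh\equiv 1$, then $\ff(\sigma)=\conj(\fg(\sigma)^{-1})$, and a direct computation in $\Hol(N)$ shows $\rho(\fg(\sigma))\cdot\ff(\sigma)=\lambda(\fg(\sigma)^{-1})$, whence $\mathcal{G}=\lambda(N)$. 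Hence it suffices to prove that at least one of $\ff$ or $\fh$ is trivial.

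Both $\ker(\ff)$ and $\ker(\fh)$ are normal subgroups of $G$, and because $G$ is quasisimple, every proper normal subgroup of $G$ sits inside $Z(G)$. So I argue by contradiction, assuming $\ker(\ff),\ker(\fh)\subseteq Z(G)$ are both proper. Then $\ff(G)$ and $\fh(G)$ are central extensions of the simple quotient $A:=G/Z(G)$. Moreover, since $\fh(\sigma)\ff(\sigma)^{-1}=\conj(\fg(\sigma))\in\Inn(N)$, the two compositions $G\to\Aut(N)\to\Out(N)$ obtained from $\ff$ and $\fh$ coincide, giving a single homomorphism $G\to\Out(N)$ whose image will be severely constrained by Lemma~\ref{Out lem} (Schreier) once we reduce $N$ to nearly simple form.

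The reduction uses Proposition~\ref{char prop}. Taking $\Lambda=Z(N)$, part~(c) together with the solvability of $\Hol(Z(N))$ forces $\fg^{-1}(Z(N))$ to be a solvable subgroup of $G$ of order $|Z(N)|$; by Proposition~\ref{h prop}(b), this subgroup is exactly the locus where $\ff$ and $\fh$ agree. Taking instead $\Lambda$ to be a maximal characteristic subgroup of $N$, the perfectness of $N$ from Proposition~\ref{thm1} gives $N/\Lambda\simeq T^m$ for some non-abelian simple $T$, so $\bar\ff,\bar\fh$ land inside $\Aut(T^m)=\Aut(T)\wr S_m$ and the induced $\bar\fg:G\to T^m$ is a crossed homomorphism that must be surjective (as $|G|=|N|$). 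Here Lemma~\ref{fpf lem} supplies nontrivial fixed points for the components of $\bar\ff$ acting on the simple factors $T$, which together with the cocycle identity tightly constrains $\bar\fg$.

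The main obstacle is closing the contradiction. The natural target is to construct two subgroups $B_1,B_2\leq A$ with $Z(B_1),Z(B_2)\neq 1$ satisfying $A=B_1B_2$, which would violate Lemma~\ref{factorization} (Szep's conjecture). These $B_i$ should be built from the images in $G/Z(G)$ of the supports of $\ker(\ff)$ and $\ker(\fh)$ on the simple factors of $N/\Lambda$, using the fact that both kernels are central while $\bar\fg$ is surjective. The delicate part is making this construction work uniformly across the CFSG list: the alternating cases invite a direct combinatorial treatment, the Lie-type cases will require Lemma~\ref{prime power index} to control the possible small-index subgroups that arise from the wreath-product action of $\bar\ff$, and the exceptional Schur-multiplier values $\PSL_3(4)$ and $\PSU_4(3)$ flagged in Lemma~\ref{Schur multiplier lem}(b) will likely need a separate verification. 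I expect the bulk of the work to lie in this final case analysis.
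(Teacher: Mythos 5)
Your opening reduction is exactly right and matches the paper: $\ff\equiv 1$ gives $\mathcal{G}=\rho(N)$, $\fh\equiv 1$ gives $\mathcal{G}=\lambda(N)$, so it suffices to show one of them is trivial; and the broad strategy (both kernels central, pass to $N/\Lambda\simeq T^m$, invoke Szep's factorization and the fixed-point lemma) is the paper's. But what you have written is a plan, not a proof, and the deferred steps are where essentially all of the difficulty lives. Most seriously, your contradiction argument necessarily runs inside $\Aut(N/\Lambda)$ (that is where $T^m$ is simple-ish and where Lemma~\ref{factorization} and Lemma~\ref{fpf lem} can be applied), so at best it shows that one of the \emph{induced} maps $\overline{\ff},\overline{\fh}$ into $\Aut(N/\Lambda)$ is trivial. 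That does not yet give triviality of $\ff$ or $\fh$ on $N$ itself: you need the restriction map $\Aut(N)\to\Aut(N/\Lambda)$ to be injective, which the paper obtains only after proving $\Lambda\subset Z(N)$ (its Lemma~\ref{M central}) --- a genuinely hard step requiring the metabelian criterion for regular subgroups of $\Hol(\Lambda)$, explicit \textsc{Magma} checks for the exceptional Schur multipliers of $\PSL_3(4)$ and $\PSU_4(3)$, and element-order bounds in $\Aut(\Lambda)$. Your proposal never addresses this descent.

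Two further problems. First, the assertion that $\Hol(Z(N))$ is solvable is false in general (take $Z(N)\simeq(\bZ/2\bZ)^3$, whose automorphism group is the simple group $\GL_3(2)$), and in any case the implication you want runs the other way: the paper uses that $\fg^{-1}(\Lambda)=Z(G)$ is \emph{abelian} together with Proposition~\ref{char prop}(c) to conclude via \cite[Theorem 1.3(b)]{Tsang solvable} that $\Lambda$ is metabelian. Second, the reductions you take for granted --- that $\overline{\ff}(G)$ lands in $\Inn(T^m)$ (killing both the $S_m$-component and $\Out(T)^m$), that $m=1$, and that $|\Lambda|=|Z(G)|$ with $G/Z(G)\simeq T$ --- each require real arguments in the paper ($\ell$-adic valuation counting against $v_\ell(S_m)$, Burnside's theorem, and the non-exceptional $\PSL_n(q)/\PSU_n(q)$ analysis), and without $m=1$ the target of Szep's lemma is not even a simple group. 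Finally, note that in the paper Szep's conjecture is used only at an intermediate stage to force $\ker(\overline{\ff})\subset\ker(\overline{\fh})$ or conversely; the actual contradiction when both maps are nontrivial comes from exhibiting $\varphi_h^{-1}\circ\varphi_f$ as a fixed-point-free automorphism of $G/Z(G)$, contradicting Lemma~\ref{fpf lem}. Your sketch gestures at both ingredients but does not assemble them, and the case analysis you defer to the end is not where the proof actually closes.
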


Theorem \ref{thm} would follow, because then $N\simeq \mathcal{G}\simeq G$.

\vspace{1mm}

Let us first set up the notation. By Proposition~\ref{fg prop}, we know that
\[\mathcal{G}=\{\rho(\fg(\sigma))\cdot\ff(\sigma):\sigma\in G\},\mbox{ where }
\begin{cases}\ff\in\Hom(G,\Aut(N)),\\\fg\in Z_\ff^1(G,N)\mbox{ is bijective}.\end{cases}\]
Alternatively, we may rewrite it as
\[\mathcal{G}= \{\lambda(\fg(\sigma))^{-1}\cdot\fh(\sigma):\sigma\in G\},\mbox{ where }\fh\in\Hom(G,\Aut(N))\]
is defined as in Proposition~\ref{h prop}.  Let $M$ be any maximal characteristic subgroup of $N$. Then, as in Proposition~\ref{char prop}, we have homomorphisms
\[\begin{tikzcd}[column sep=3cm]
\overline{\ff},\overline{\fh}:G\arrow{r}{\ff,\fh} & \Aut(N) \arrow{r}{\varphi\mapsto(\eta M\mapsto\varphi(\eta)M)} & \Aut(N/M)
\end{tikzcd}\]
induced by $\ff$ and $\fh$, respectively, and a surjective crossed homomorphism 
\[\begin{tikzcd}[column sep=3cm]
\overline{\fg}:G\arrow{r}{\fg} & N \arrow{r}{\text{\tiny quotient map}} & N/M
\end{tikzcd}\]
with respect to $\overline{\ff}$ induced by $\fg$. We shall also need the facts that
\begin{align}\label{quotient of M}
&\mbox{$Z(G)$ is a quotient of the Schur multiplier of $G/Z(G)$},\\\label{QS norsub}
&\mbox{all proper normal subgroups of $G$ are contained in $Z(G)$}.
\end{align}
See \cite[(33.8)]{A book} for the former, and the latter is an easy exercise.

\subsection{Non-perfect groups}

Suppose for contradiction that $N$ is not perfect, in which case we may take $M$ to contain $[N,N]$. By (\ref{N/M}), we then have
\[ N/M \simeq (\bZ/p\bZ)^m,\mbox{ where $p$ is a prime and }m\in\bN.\]
Let us first make a simple observation.
\begin{lem}\label{m neq 1}The homomorphism $\overline{\ff}$ is non-trivial and $m\geq 2$.
\end{lem}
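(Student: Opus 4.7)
The proof should be essentially a one-two punch, and the key ingredient in each part is that $G$ is perfect (being quasisimple, $G = [G,G]$).

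The plan is to deduce the non-triviality of $\overline{\ff}$ first, since the conclusion $m \geq 2$ then follows almost immediately. Suppose for contradiction that $\overline{\ff}$ is trivial. Then the crossed homomorphism relation $\overline{\fg}(\sigma\tau) = \overline{\fg}(\sigma) \cdot \overline{\ff}(\sigma)(\overline{\fg}(\tau))$ collapses to $\overline{\fg}(\sigma\tau) = \overline{\fg}(\sigma)\overline{\fg}(\tau)$, so $\overline{\fg}$ is an ordinary homomorphism from $G$ into the abelian group $N/M$. Any such homomorphism must factor through $G/[G,G] = 1$, so $\overline{\fg}$ is trivial. This contradicts the surjectivity of $\overline{\fg}$ onto the non-trivial group $N/M$ (non-trivial because $M$ is a proper characteristic subgroup of $N$). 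Hence $\overline{\ff}$ is non-trivial.

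For the second assertion, suppose for contradiction that $m = 1$, so that $N/M \simeq \bZ/p\bZ$. Then $\Aut(N/M) \simeq (\bZ/p\bZ)^\times$ is cyclic, hence abelian. The homomorphism $\overline{\ff}: G \to \Aut(N/M)$ therefore factors through $G/[G,G] = 1$ and so is trivial, contradicting the first part. Hence $m \geq 2$.

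There is no real obstacle here; the entire argument rides on the fact that $G$ is perfect and that $N/M$ (and hence, in the $m=1$ case, $\Aut(N/M)$) is abelian. The only subtlety worth being careful about is to invoke the right non-triviality at each stage — that $N/M$ is non-trivial because $M$ is a proper characteristic subgroup, and that the contradiction in the second step is with the non-triviality of $\overline{\ff}$ established in the first step, not with anything about $\overline{\fg}$ directly.
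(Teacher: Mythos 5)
Your proof is correct and follows essentially the same route as the paper: both parts rest on $G$ being perfect mapping into an abelian target ($N/M$ in the first step, $\Aut(N/M)\simeq(\bZ/p\bZ)^\times$ when $m=1$ in the second), with the contradiction coming from the surjectivity of $\overline{\fg}$. The only cosmetic difference is that the paper cites Proposition~\ref{h prop}(c) for the fact that $\overline{\fg}$ becomes a homomorphism when $\overline{\ff}$ is trivial, whereas you verify it directly from the crossed homomorphism identity.
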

\begin{proof}Suppose for contradiction that $\overline{\ff}$ is trivial. Then, the map
\[\begin{tikzcd}[column sep = 2cm]
G \arrow{r}{\overline{\fg}} & N/M \arrow{r}{\simeq} &(\bZ/p\bZ)^m
\end{tikzcd}\]
is a homomorphism by Proposition~\ref{h prop}(c), and so must be trivial because $G$ is perfect. This contradicts that $\overline{\fg}$ is surjective, so indeed $\overline{\ff}$ is non-trivial. It follows that $m\geq2$, for otherwise
\[\begin{tikzcd}[column sep = 2cm]
G \arrow{r}{\overline{\ff}} & \Aut(N/M) \arrow{r}{\simeq} & (\bZ/p\bZ)^\times
\end{tikzcd}\]
would be trivial again because $G$ is perfect, which we know is impossible.
\end{proof}

Define $H = \fg^{-1}(M)$, which is a subgroup of $G$ by Proposition~\ref{char prop}(b), and whose order is $|M|$ because $\fg$ is bijective. Thus, we have $[G:H] = p^m$. Put
\[ p^z = [HZ(G):H] = [Z(G) : H\cap Z(G) ],\mbox{ where }0\leq z\leq m.\]
Note that $p$ divides $|Z(G)|$ unless $z=0$. Also $m-z\geq 1$, for otherwise
\[ G = HZ(G)\mbox{ and in particular }G= [G,G] = [H,H],\]
which is impossible because $H$ is a proper subgroup. Since
\[ \left[\frac{G}{Z(G)} : \frac{HZ(G)}{Z(G)}\right] = \frac{[G:H]}{[HZ(G):H]} = p^{m-z},\]
by Lemma~\ref{prime power index} one of the following holds:
\begin{enumerate}[(a)]
\item $G/Z(G)\simeq A_{p^{m-z}}$ with $p^{m-z}\geq 5$;
\item $G/Z(G)\simeq\PSL_n(q)$ with $p^{m-z} = (q^n-1)/(q-1)$;
\item $G/Z(G)\simeq\PSL_2(11)$ with $p^{m-z} = 11$;
\item $G/Z(G)\simeq M_{11}$ with $p^{m-z} = 11$, or $G/Z(G)\simeq M_{23}$ with $p^{m-z} = 23$;
\item $G/Z(G)\simeq\mbox{PSU}_4(2)$ with $p^{m-z} = 27$.
\end{enumerate}
Since Theorem~\ref{thm} holds when $G$ is a finite non-abelian simple group by \cite{Byott simple}, and when $G$ is the double cover of $A_n$ with $n\geq 5$ by \cite[Theorem 1.6]{Tsang HG}, we may henceforth assume that:

\begin{assumption}The center $Z(G)$ of $G$ is non-trivial.\end{assumption}

Recall from (\ref{quotient of M}) that $|Z(G)|$ divides $\mathfrak{m}(G/Z(G))$. Hence, this assumption in particular restricts that $\mathfrak{m}(G/Z(G))\neq1$.

\begin{assumption}The group $G$ is not the double cover of an alternating group.\end{assumption}

\begin{lem}\label{case b lem}We must be in case $(b)$.
\end{lem}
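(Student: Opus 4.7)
The plan is to rule out cases (a), (c), (d), and (e) and leave (b) as the only possibility. Throughout, I will use the Schur-multiplier divisibility $|Z(G)|\mid\mathfrak{m}(G/Z(G))$ from (\ref{quotient of M}), the bound from Lemma~\ref{Schur multiplier lem}, the two standing Assumptions (that $Z(G)$ is non-trivial and $G$ is not a double cover of an alternating group), and the inequality $m\geq 2$ from Lemma~\ref{m neq 1}. The recurring auxiliary observation, already noted in the setup, is that $p^z=[Z(G):H\cap Z(G)]$ divides $|Z(G)|$, so whenever $p\nmid|Z(G)|$ one has $z=0$ and hence $m=m-z$, forcing $m-z\geq 2$.

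Cases (c), (d), and the subcase $p^{m-z}=7$ of (a) all succumb to one uniform argument. In each of these, $G/Z(G)$ is one of $A_7$, $\PSL_2(11)$, $M_{11}$, $M_{23}$, none of which is an exception in Lemma~\ref{Schur multiplier lem}(b); so $\mathfrak{m}(G/Z(G))\in\mathfrak{M}=\{1,2,3,4,6,12\}$ and hence $|Z(G)|\leq 12$. The corresponding primes $p\in\{7,11,23\}$ divide no element of $\mathfrak{M}$, so $z=0$ and $m-z=1$, contradicting Lemma~\ref{m neq 1}.

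The remaining subcases of (a) have $p^{m-z}\geq 5$ and $p^{m-z}\neq 7$ (noting that $6$ is not a prime power). For these, the well-known value $\mathfrak{m}(A_{p^{m-z}})=2$ forces $|Z(G)|=2$, which makes $G$ the double cover of $A_{p^{m-z}}$, contradicting the second Assumption.

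The main obstacle is case (e), where $p=3$ may divide $|Z(G)|$ so the $z=0$ trick is unavailable, and one must switch from subgroup-index bookkeeping to an embedding-size argument. Here $p^{m-z}=27$ forces $p=3$ and $m-z=3$, and by Lemma~\ref{m neq 1} the homomorphism $\overline{\ff}:G\to\Aut(N/M)=\GL_3(\bZ/3\bZ)$ is non-trivial. By (\ref{QS norsub}) its kernel is a proper normal subgroup of $G$ and is therefore contained in $Z(G)$, so the image of $\overline{\ff}$ has order at least $|G|/|Z(G)|=|\PSU_4(2)|=25920$. Since $|\GL_3(\bZ/3\bZ)|=11232<25920$, this is impossible, ruling out (e) and completing the proof.
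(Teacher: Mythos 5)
Your overall strategy is the same as the paper's: rule out (a), (c), (d) via the observation that $p^z$ divides $|Z(G)|$, which divides $\mathfrak{m}(G/Z(G))\in\mathfrak{M}$, hence $z=0$ and $m=1$ against Lemma~\ref{m neq 1} (the paper disposes of (d) even faster, via $\mathfrak{m}(M_{11})=\mathfrak{m}(M_{23})=1$ and the first Assumption); kill the remaining subcases of (a) by the second Assumption; and kill (e) by an order comparison with $\GL_3(3)$. Your handling of (a), (c), (d) is correct.

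There is, however, a genuine (if small) gap in your case (e). You assert that ``the $z=0$ trick is unavailable'' because $p=3$ may divide $|Z(G)|$, and you only derive $m-z=3$ --- yet in the very next line you write $\Aut(N/M)=\GL_3(\bZ/3\bZ)$, which requires $m=3$, i.e.\ $z=0$. If $z\geq 1$ then $m\geq 4$, and the order comparison collapses: $|\GL_4(3)|=24261120$ is far larger than $|G|/|Z(G)|=25920$, so no contradiction is obtained. The gap is easily repaired, and the paper does exactly this: $|Z(G)|$ divides $\mathfrak{m}(\PSU_4(2))=2$, so $p=3$ does \emph{not} divide $|Z(G)|$, forcing $z=0$ and $m=3$ (and also $|Z(G)|=2$ by the first Assumption, giving $|G|=51840$ and image of order at least $25920>|\GL_3(3)|=11232$). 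So the $z=0$ reduction is in fact available in case (e), and your argument needs it; as written, the justification for $\Aut(N/M)=\GL_3(\bZ/3\bZ)$ is missing.
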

\begin{proof}Case (d) does not occur by our first assumption because
\[ \mathfrak{m}(M_{11}) = 1 = \mathfrak{m}(M_{23}).\]
To deal with cases (a), (c), and (e), note that
\[\mathfrak{m}(A_{n}) = \begin{cases}2&\mbox{if $n=5$ or $n\geq 8$}\\
 6 & \mbox{if $n=6,7$}\end{cases}\mbox{ and }\mathfrak{m}(\PSL_2(11)) = 2 = \mathfrak{m}(\PSU_4(2)).\]
For case (a), we must have $p^{m-z}=7$ by our second assumption. For case (c), we have $p^{m-z}=11$. In both cases, note that $p$ does not divide $\mathfrak{m}(G/Z(G))$, so $z=0$ and $m=1$. But this contradicts Lemma~\ref{m neq 1}. For case (e), we have $p^{m-z}=27$. Since $p$ does not divide $\mathfrak{m}(G/Z(G))$, we have $z=0$ and $m=3$. Also, by our first assumption, necessarily
\[ |Z(G)| = 2,\mbox{ and so }|G| = 2|\PSU_4(2)| = 51840.\]
But $|\GL_m(p)| = |\GL_3(3)|= 11232$, so the homomorphism
\[\begin{tikzcd}[column sep = 2cm]
G \arrow{r}{\overline{\ff}} & \Aut(N/M) \arrow{r}{\simeq} &\GL_3(3)
\end{tikzcd}\]
is trivial by (\ref{QS norsub}) and by comparing orders. But this contradicts Lemma~\ref{m neq 1}. Thus, indeed we must be in case (b).
\end{proof}

In view of Lemma~\ref{case b lem}, we now know that
\[G/Z(G)\simeq \PSL_n(q)\mbox{ with }p^{m-z} = (q^n-1)/(q-1).\]
By \cite[Theorem 5.1.4]{groups book}, we also know that $\mathfrak{m}(\PSL_n(q)) = \gcd(n,q-1)$, unless $(n,q)$ equals one of the five pairs stated in the next lemma. Let us first rule out these cases.

\begin{lem}\label{exceptional lem}We have $(n,q)\neq(2,4),(2,9),(3,2),(3,4),(4,2)$.
\end{lem}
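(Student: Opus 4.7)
The plan is to derive a contradiction in each of the five cases by combining two simple observations: that $p^{m-z} = (q^n-1)/(q-1)$ must be a power of a single prime $p$ (recall $m-z\geq 1$), and that the Schur multipliers of the exceptional groups $\PSL_n(q)$ listed are too small to be divisible by the prime $p$ that would arise. The rest falls out of Lemma~\ref{m neq 1}.

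First I would compute $(q^n-1)/(q-1)$ for each pair, obtaining $5, 10, 7, 21, 15$ for $(2,4), (2,9), (3,2), (3,4), (4,2)$ respectively. Three of these values, namely $10 = 2\cdot 5$, $21 = 3\cdot 7$, and $15 = 3\cdot 5$, are not prime powers, which is already incompatible with $p^{m-z} = (q^n-1)/(q-1)$. So the pairs $(2,9)$, $(3,4)$, and $(4,2)$ are eliminated immediately.

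For the remaining two pairs I would argue in the same way. In case $(2,4)$ we would have $p=5$ and $G/Z(G)\simeq\PSL_2(4)\simeq A_5$ with $\mathfrak{m}(A_5)=2$; in case $(3,2)$ we would have $p=7$ and $G/Z(G)\simeq\PSL_3(2)$ with $\mathfrak{m}(\PSL_3(2))=2$. In each case $p$ does not divide $\mathfrak{m}(G/Z(G))$. Since (\ref{quotient of M}) forces $|Z(G)|$ to divide $\mathfrak{m}(G/Z(G))$, we conclude $p\nmid|Z(G)|$, hence $z=0$, and so $m = m-z = 1$, contradicting Lemma~\ref{m neq 1}. (For $(2,4)$ one could alternatively note that the first standing assumption forces $|Z(G)|=2$, making $G$ a double cover of $A_5$, and then invoke the second standing assumption directly.)

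I do not anticipate any real obstacle: the lemma collapses to a short numerical check, made possible by the fact that the Schur multipliers of these small exceptional groups happen to avoid the primes arising from $(q^n-1)/(q-1)$. The heavier lifting for the section lies in the complementary non-exceptional case, where the formula in (\ref{non except}) applies and a finer structural analysis is needed.
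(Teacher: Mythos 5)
Your proposal is correct and follows essentially the same route as the paper: eliminate $(2,9)$, $(3,4)$, $(4,2)$ because $(q^n-1)/(q-1)$ is not a prime power there, then for $(2,4)$ and $(3,2)$ use $\mathfrak{m}=2$ with $p=5$ or $7$ to force $z=0$ and $m=1$, contradicting Lemma~\ref{m neq 1}. The only difference is that you write out the five values explicitly, which the paper leaves implicit.
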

\begin{proof}Suppose for contradiction that $(n,q)$ is one of the stated pairs. Then this pair must be $(2,4)$ or $(3,2)$ because $(q^n-1)/(q-1)$ is a prime power. Note that
\[ p^{m-z} = \frac{q^n-1}{q-1} = \begin{cases} 
5&\mbox{if }(n,q) = (2,4),\\
7&\mbox{if }(n,q) = (3,2).
\end{cases}\]
But $\mathfrak{m}(\PSL_2(4)) = 2 = \mathfrak{m}(\PSL_3(2))$. In either case, since $p$ does not divide $2$, we see that $z=0$, and so $m=1$. But this contradicts Lemma~\ref{m neq 1}.
\end{proof}

\begin{lem}\label{SL lem}We have $G\simeq \SL_n(q)$ and $|Z(G)|=n=p$.
\end{lem}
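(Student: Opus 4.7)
The plan is to analyze the Diophantine constraint $p^{m-z} = (q^n-1)/(q-1)$ together with the bound $|Z(G)| \mid \gcd(n, q-1)$, which comes from (\ref{quotient of M}) and Lemma~\ref{exceptional lem} (ensuring that $\PSL_n(q)$ is non-exceptional, so $\mathfrak{m}(\PSL_n(q)) = \gcd(n, q-1)$). I would split on the value of $m - z \geq 1$.

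First, suppose $m - z = 1$. Then $(q^n-1)/(q-1) = p$, and $m \geq 2$ (by Lemma~\ref{m neq 1}) forces $z \geq 1$, so $p$ divides $|Z(G)|$ and hence $\gcd(n, q-1)$. In particular $p \mid n$, giving $p \leq n$. But $(q^n-1)/(q-1) = 1 + q + \cdots + q^{n-1} > n$ whenever $q \geq 2$ and $n \geq 2$, a contradiction.

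Thus $m - z \geq 2$, making $(q^n-1)/(q-1)$ a non-trivial prime power. For $n \geq 3$ this is highly restrictive: by the Nagell--Ljunggren theorem, the only solution with $q$ a prime power and $p$ a prime is $(q, n) = (3, 5)$ with $p^{m-z} = 11^2$; but $\mathfrak{m}(\PSL_5(3)) = \gcd(5, 2) = 1$ forces $|Z(G)| = 1$, contradicting the non-triviality of $Z(G)$. Hence $n = 2$, and $q + 1 = p^{m-z}$. The bound $|Z(G)| \mid \gcd(2, q-1)$ together with $|Z(G)| > 1$ forces $q$ to be odd and $|Z(G)| = 2$; then $q + 1$ is even, so $p = 2$, and therefore $n = p = 2$. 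Since $|Z(G)|$ attains the full Schur multiplier of $\PSL_2(q)$, the cover $G$ must be the universal central extension, which by Lemma~\ref{exceptional lem} is $\SL_2(q)$. This delivers $G \simeq \SL_n(q)$ and $|Z(G)| = n = p$, as claimed.

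The main technical obstacle is the Nagell--Ljunggren step. If one prefers to avoid this deep number-theoretic input, the argument can be repackaged as a case-split on whether $p \mid q - 1$: in the affirmative case, Lifting-the-Exponent gives $v_p((q^n-1)/(q-1)) = v_p(n) \leq \log_p n$ for odd $p$, forcing $p^{m-z} \leq n$ in conflict with $(q^n-1)/(q-1) > n$; in the negative case, Zsygmondy forces any primitive prime divisor of $q^n - 1$ to be congruent to $1 \pmod{n}$, and the requirement that the entire quantity be a pure prime power confines the remaining possibilities to a handful of small pairs $(q, n)$ that can be ruled out explicitly.
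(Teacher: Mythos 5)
Your overall strategy (play the constraint $p^{m-z}=(q^n-1)/(q-1)$ against $|Z(G)|\mid\gcd(n,q-1)$) is the right one, and your handling of the cases $m-z=1$ and $n=2$ is sound. The gap is the step that disposes of $n\geq 3$. What you call ``the Nagell--Ljunggren theorem'' --- that $(q^n-1)/(q-1)=p^{k}$ with $n\geq 3$ and $k\geq 2$ forces $(q,n,p,k)=(3,5,11,2)$ --- is, for general exponent $k$, the Nagell--Ljunggren \emph{conjecture}, which is open. Ljunggren settled the exponent $2$ (and Nagell essentially the exponent $3$), so your claim is justified when $m-z$ has a prime factor $2$ or $3$; but at this point of the argument nothing bounds $m$, hence nothing bounds $m-z$, and for exponents such as $k=5$ the equation $(q^n-1)/(q-1)=y^{5}$ is not resolved, even with $y$ prime. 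Your fallback sketch does not close this either: the branch $p\mid q-1$ works via lifting the exponent (for odd $p$; the case $p=2$ needs separate care), but in the branch $p\nmid q-1$ the conclusion $p\equiv 1\pmod n$ does not ``confine the remaining possibilities to a handful of small pairs'' --- you are facing exactly the same open Diophantine problem.

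The paper sidesteps all of this. It imports from \cite[(3.3)]{RG} only the elementary fact that $p^{m-z}=(q^n-1)/(q-1)$ forces $n$ to be prime. Combined with $1\neq|Z(G)|$ and $|Z(G)|\mid\mathfrak{m}(G/Z(G))=\gcd(n,q-1)$, this gives $\gcd(n,q-1)=n$, hence $q\equiv 1\pmod n$, hence $|Z(G)|=n$ and $G\simeq\SL_n(q)$; reducing $p^{m-z}=1+q+\cdots+q^{n-1}\equiv n\equiv 0\pmod n$ then yields $n=p$. No classification of solutions of the Diophantine equation is needed here: the actual determination of the surviving pairs $(n,q)$ is deferred to the representation-theoretic degree bound of Proposition~\ref{SL prop} in the proof of Proposition~\ref{thm1}. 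Note that your argument, if it worked, would prove $n=2$ outright, which is strictly more than the lemma claims --- and that extra strength is precisely what pushes you into open Diophantine territory. To repair your write-up, replace the Nagell--Ljunggren appeal for $n\geq 3$ with the paper's congruence argument (or restrict yourself to proving only what the lemma asserts).
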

\begin{proof}By Lemma~\ref{exceptional lem} and our first assumption, respectively, we have
\[ \mathfrak{m}(G/Z(G)) = \gcd(n,q-1) \mbox{ and }\mathfrak{m}(G/Z(G)) \neq 1.\]
As noted in \cite[(3.3)]{RG}, that $p^{m-z}=(q^n-1)/(q-1)$ implies that $n$ is a prime. It then follows that
\[\gcd(n,q-1) = n,\mbox{ and so }q\equiv1\hspace{-3mm}\pmod{n}.\]
Moreover, we must have $|Z(G)|=n$ and also $G\simeq \SL_n(q)$, the universal cover of $\PSL_n(q)$. Note that
\[ p^{m-z} \equiv q^{n-1}+\cdots +q+1 \equiv 1+\cdots + 1 + 1 \equiv n \equiv 0 \hspace{-2mm}\pmod{n}\]
so in fact $n=p$. This completes the proof.
\end{proof}

We shall now use the next proposition to get a contradiction and thus prove Proposition \ref{thm1}; cf. \cite[Theorem 4.3]{Byott simple} and the argument in \cite[Section 4]{Byott simple}.

\begin{prop}\label{SL prop}Suppose that $\SL_n(q)$ has a non-trivial irreducible representation of degree $d$ over a field of characteristic coprime to $q$, where
\[ (n,q)\neq (3,2),(3,4),(4,2),(4,3).\]
Then, we have
\[ d\geq \begin{cases}(q-1)/\gcd(2,q-1) &\mbox{if $n=2$},\\ (q^n-1)/(q-1) - 2&\mbox{if $n\geq 3$}.\end{cases}\]
\end{prop}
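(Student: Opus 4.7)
The plan is to appeal to the Landazuri--Seitz lower bounds on the minimal degrees of non-trivial cross-characteristic representations of the finite groups of Lie type. Since the characteristic of the coefficient field is coprime to $q$, standard Brauer-theoretic reduction arguments allow one to transfer bounds between ordinary (complex) and modular representations of $\SL_n(q)$ without serious loss, so the strategy is to split into the two cases $n=2$ and $n \geq 3$ and invoke the classical bound in each.

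For $n = 2$, the irreducible complex characters of $\SL_2(q)$ are completely known from the classical work of Schur, Jordan, and Frobenius. The smallest degree of a non-trivial irreducible complex character is exactly $(q-1)/\gcd(2,q-1)$, realized by the half-Weil (for $q$ odd) or cuspidal (for $q$ even) representations. The corresponding bound in cross characteristic follows by Brauer reduction: a non-trivial irreducible Brauer character in characteristic coprime to $q$ arises as a constituent of the reduction modulo $\ell$ of some non-trivial ordinary character, whose degree already satisfies the bound.

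For $n \geq 3$, I would invoke the original Landazuri--Seitz theorem, which states that every non-trivial projective complex representation of $\PSL_n(q)$ has degree at least $(q^n - 1)/(q-1) - 1$, subject to a short list of small exceptions. Passing from $\PSL_n(q)$ up to $\SL_n(q)$ is harmless because any non-trivial irreducible of $\SL_n(q)$ either factors through $\PSL_n(q)$ or has degree at least as large; the slightly weaker $-2$ in the stated bound then absorbs the transition to the modular cross-characteristic setting, where the Seitz--Zalesskii extension of Landazuri--Seitz yields essentially the same lower bound. The excluded pairs $(3,2), (3,4), (4,2), (4,3)$ are precisely those where exceptional isomorphisms or sporadic small representations permit smaller degrees (for instance $\PSL_3(2) \simeq \PSL_2(7)$ and $\PSL_4(2) \simeq A_8$ both admit unusually small faithful representations).

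The main obstacle is verifying that the cross-characteristic extension of Landazuri--Seitz really does deliver the stated bound with exactly the listed exceptions, since the original Landazuri--Seitz paper handles only complex representations. Once one cites the appropriate modular refinement (for example the tables in Kleidman--Liebeck, or the survey of minimal degrees by Tiep and collaborators), the proposition follows by direct inspection; essentially all of the genuine work sits in the representation-theoretic literature that establishes the cross-characteristic bound, and here it is used as a black box.
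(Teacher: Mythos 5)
Your overall strategy is essentially the paper's: the paper proves this proposition by a bare citation to \cite{SL bound} (Lemma 9.1.1 and Theorem 9.1.5 of Guralnick--Penttila--Praeger--Saxl), i.e.\ it treats the cross-characteristic minimal-degree bounds for $\SL_n(q)$ as a black box from the literature, exactly as you propose to do via Landazuri--Seitz and its modular refinements. So as a matter of approach there is nothing to separate the two.

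One intermediate step in your write-up is, however, genuinely wrong as stated: the ``Brauer reduction'' you invoke for $n=2$ runs in the wrong direction. A non-trivial irreducible Brauer character in characteristic $\ell$ coprime to $q$ is a constituent of the reduction modulo $\ell$ of some ordinary character, and its degree is therefore \emph{at most} --- not at least --- the degree of that ordinary character; lower bounds on complex degrees do not transfer to lower bounds on $\ell$-modular degrees by this mechanism. Indeed the minimal cross-characteristic degree can be strictly smaller than the minimal complex degree: for $\SL_n(q)$ with $n\geq 3$, the complex irreducible of degree $(q^n-1)/(q-1)-1$ (the non-trivial constituent of the permutation character on projective points) reduces modulo a prime $\ell$ dividing $(q^n-1)/(q-1)$ to a trivial constituent plus an irreducible Brauer character of degree $(q^n-1)/(q-1)-2$, which is precisely why the proposition is stated with the $-2$. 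Since you ultimately defer to the genuinely modular results (Seitz--Zalesskii, Guralnick--Tiep, or \cite{SL bound} itself) for the actual bound, the conclusion stands, but the sentence deriving the $n=2$ modular bound from the complex character table of $\SL_2(q)$ by ``Brauer reduction'' should be deleted and replaced by a direct citation of the modular statement.
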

\begin{proof}See \cite[Lemma 9.1.1 and Theorem 9.1.5]{SL bound}.
\end{proof}

\begin{proof}[Proof of Proposition~$\ref{thm1}$]
By Lemmas~\ref{m neq 1} and \ref{SL lem}, we have $m\geq 2$ and there is a non-trivial homomorphism
\[\begin{tikzcd}[column sep = 2cm]
\SL_n(q)  \arrow{r}{\simeq}  & G \arrow{r}{\overline{\ff}} & \Aut(N/M) \arrow{r}{\simeq} & \GL_m(p).
\end{tikzcd}\]
Since $p^{m-z}=(q^n-1)/(q-1)$, we have $\gcd(p,q)=1$ and also $(n,q)\neq (4,3)$. By Lemma~\ref{exceptional lem}, we may then apply Proposition~\ref{SL prop} as follows. Recall that $|Z(G)| = n = p$ by Lemma~\ref{SL lem}, which in turn implies $z=0,1$.
\vspace{1mm}

For $n=2$, note that $p^{m-z} = q+1$, and we obtain
\[ m \geq \frac{q-1}{\gcd(2,q-1)}= \frac{p^{m-z}-2}{\gcd(2,p^{m-z}-2)} \geq \frac{2^{m-1}-2}{2}.\]
For $n\geq 3$, similarly we have
\[ m \geq \frac{q^n-1}{q-1} - 2 = p^{m-z} - 2 \geq 3^{m-1}-2.\]
From the above inequalities, we deduce that
\[ (m,p) = \begin{cases}
(2,2),(3,2),(4,2) & \mbox{if }n=2,\\ (2,3) & \mbox{if }n\geq 3.
\end{cases}\]
Since $n^{m-z} = p^{m-z} = (q^n-1)/(q-1)$ and $z=0,1$, it follows that
\[ (n,q) = (2,3),(2,7),\mbox{ and in fact necessarily }(n,q) = (2,7)\]
because $\PSL_2(3)$ is not simple. We are left with the possibility $G\simeq \mathrm{SL}_2(7)$.

\vspace{1mm}

Using the {\tt Holomorph} and {\tt RegularSubgroups} commands  in \textsc{Magma} \cite{magma}, we checked that $\Hol(N)$ has no regular subgroup isomorphic to $G\simeq\SL_2(7)$ for all non-perfect groups $N$ of order $336$. We remark that in fact it suffices to check the insolvable groups $N$ of order $336$ by \cite[Theorem 1.10]{Tsang solvable}. Thus, we obtain a contradiction, so indeed $N$ must be perfect.\end{proof}

\subsection{Perfect groups}

By Proposition \ref{thm1}, we know that $N$ must be perfect, in which case all quotients of $N$ are also perfect. By (\ref{N/M}), we then have
\[ N/M \simeq T^m,\mbox{ where $T$ is non-abelian simple and }m\in\bN,\]
and by \cite[Lemma 3.2]{Byott simple} for example, we know that
\[ \Aut(T^m) = \Aut(T)^m \rtimes S_m.\]
We shall also use Burnside's theorem, which states that the order of a finite insolvable group is divisible by at least three distinct primes.

\vspace{1mm}

The cases $G/Z(G)\simeq \PSL_n(q),\PSU_n(q)$ require special arguments because then $\mathfrak{m}(G/Z(G))$ and so $|Z(G)|$ could be arbitrarily large. Let us recall that
\begin{align}\label{order}
|\PSL_n(q)| & = \frac{1}{\gcd(n,q-1)} \left(q^{{n\choose 2}}\prod_{i=2}^{n}(q^i-1)\right),\\\notag
|\PSU_n(q)| & = \frac{1}{\gcd(n,q+1)}\left(q^{{n\choose 2}}\prod_{i=2}^{n}(q^i-(-1)^i)\right).
\end{align}
We shall prove that either $\ff$ or $\fh$ is trivial in a sequence of steps.

\begin{lem}\label{Inn lem}The image $\overline{\ff}(G)$ lies in $\Inn(N/M)$.
\end{lem}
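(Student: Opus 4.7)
The plan is to argue by contradiction. Suppose $\overline{\ff}(G) \not\subseteq \Inn(N/M)$. Since $T$ is centerless and simple, $\Inn(N/M) \simeq T^m$, and the quotient $\Aut(N/M)/\Inn(N/M)$ is isomorphic to $\Out(T)^m \rtimes S_m$. Let $\psi : G \to \Out(T)^m \rtimes S_m$ be the composition of $\overline{\ff}$ with the quotient map. By hypothesis $\psi$ is non-trivial, so by (\ref{QS norsub}) its kernel is a proper normal subgroup of $G$ and hence contained in $Z(G)$; in particular $\psi(G)$ admits the simple group $G/Z(G)$ as a quotient.

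Next, compose $\psi$ with the projection $\pi : \Out(T)^m \rtimes S_m \to S_m$ to obtain $\phi := \pi \circ \psi : G \to S_m$, and split into two cases.

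If $\phi$ is trivial, then $\psi(G) \subseteq \Out(T)^m$, which is solvable by Schreier's conjecture (Lemma \ref{Out lem}). Since $\psi(G)$ is a quotient of the perfect group $G = [G,G]$, it is itself perfect, and a perfect solvable group is trivial. This contradicts the non-triviality of $\psi$.

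If $\phi$ is non-trivial, then $G$ acts non-trivially on the $m$ simple factors of $N/M$, and I expect this to be the main obstacle. The strategy is to exploit the surjectivity of the crossed homomorphism $\overline{\fg} : G \to T^m$. A non-trivial $G$-orbit $\Omega$ of size $k \geq 2$ must exist, and the corresponding product $T^\Omega = \prod_{i \in \Omega} T_i$ is $\overline{\ff}(G)$-invariant; projecting $\overline{\fg}$ onto $T^\Omega$ yields a surjective crossed homomorphism $G \to T^\Omega$ under which $G$ still permutes the $k$ factors transitively. One then introduces the auxiliary homomorphism $\overline{\fh}$ from Proposition \ref{h prop} and applies parts (c) and (d) to restrict $\overline{\fg}$ and $\overline{\fg}^{-1}$ on $\ker(\overline{\ff})$ and $\ker(\overline{\fh})$, obtaining genuine homomorphisms into $T^\Omega$. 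Combined with the fixed-point lemma (Lemma \ref{fpf lem}) and Szep's factorization lemma (Lemma \ref{factorization}) applied to $T$, and with the structural restriction that the simple quotient $G/Z(G)$ must admit a faithful permutation representation on the orbit, this should force a contradiction—paralleling Byott's argument for simple $G$ in \cite{Byott simple}, but requiring extra care to track the possibly non-trivial center $Z(G)$ through the reductions.
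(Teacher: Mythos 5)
Your reduction to the two cases is the same as the paper's: the composite $G\to S_m$ is either trivial or not, and in the trivial case your argument (image lands in $\Aut(T)^m$, the further projection to $\Out(T)^m$ kills the perfect group $G$ by Schreier, hence the image lies in $\Inn(T)^m=\Inn(N/M)$) is exactly the paper's and is complete. The gap is in the other case, which is where essentially all of the work in this lemma lives.

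For the case where $G$ permutes the $m$ factors non-trivially, you only state a plan (``I expect this to be the main obstacle,'' ``this should force a contradiction'') rather than an argument, and the plan points at the wrong toolkit. The devices you list --- the auxiliary homomorphism $\overline{\fh}$, the fixed-point lemma (Lemma~\ref{fpf lem}), and Szep's factorization (Lemma~\ref{factorization}) --- are precisely what the paper deploys \emph{later}, in Lemmas~\ref{M = Z(G)} and~\ref{f or h trivial}, and only \emph{after} $m=1$ has been established; the fixed-point argument there needs $\overline{\ff}$ and $\overline{\fh}$ to induce isomorphisms $G/Z(G)\to\Inn(N/M)$ with the same image, and Szep's lemma is stated for a single simple group, neither of which is available while $G$ is still shuffling $m\geq 2$ factors. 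What the paper actually does here is a purely quantitative argument: for each prime $\ell\mid |T|$ one has $v_\ell(G)\geq m$ (since $|G|=|T|^m|M|$) while $v_\ell(S_m)<m/(\ell-1)$, so a non-trivial map $G\to S_m$ with kernel inside $Z(G)$ forces $\ell\mid |Z(G)|$ for \emph{every} prime divisor $\ell$ of $|T|$; Burnside's theorem then gives $\mathfrak{m}(G/Z(G))$ at least three prime divisors, which by Lemma~\ref{Schur multiplier lem} pins $G/Z(G)$ down to non-exceptional $\PSL_n(q)$ or $\PSU_n(q)$, and the order formulae (\ref{order}) together with $|Z(G)|\mid\gcd(n,q\mp1)$ produce the contradiction. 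You flag that the unbounded center is the difficulty but supply no estimate that controls it, so the case that actually requires proof is missing.
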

\begin{proof} Below, we shall show that the homomorphism
\[ \begin{tikzcd}[column sep = 2cm]
\overline{\ff}_{S_m}:G \arrow{r}{\overline{\ff}} & \Aut(N/M) \ar[equal]{r}{ \mbox{\tiny identification}}  & \Aut(T)^m \rtimes S_m \arrow{r}{\text{\tiny projection}} & S_m
\end{tikzcd}\]
is trivial. Then, the image $\overline{\ff}(G)$ lies in $\Aut(T)^m$, and the homomorphism
\[ \begin{tikzcd}[column sep = 2cm]
G \arrow{r}{\overline{\ff}} & \Aut(T)^m \arrow{r}{\text{\tiny projection}} & \Out(T)^m
\end{tikzcd}\]
is also trivial, because $G$ is perfect while $\Out(T)$ is solvable by Lemma~\ref{Out lem}. \par\noindent It follows that $\overline{\ff}(G)$ lies in $\Inn(T)^m$, which is identified with $\Inn(N/M)$.

\vspace{1mm}

To prove that $\overline{\ff}_{S_m}$ is trivial, let $\ell$ be any prime factor of $|T|$. For any finite group $\Gamma$, let $v_\ell(\Gamma)$ be the non-negative integer such that $\ell^{v_\ell(\Gamma)}$ exactly divides $|\Gamma|$. We have $v_\ell(G) \geq m$ because $|G| = |N| = |T|^m|M|$. It is well-known that
\[ v_\ell(S_m) = \left\lfloor \frac{m}{\ell}\right\rfloor + \left\lfloor \frac{m}{\ell^2}\right\rfloor + \left\lfloor \frac{m}{\ell^3}\right\rfloor +\cdots\mbox{ and so }v_\ell(S_m)< \frac{m}{\ell-1}.\]
Since $G/\ker(\overline{\ff}_{S_m})$ embeds into $S_m$, we then deduce that
\begin{equation}\label{embed ineq} v_\ell(G) - v_\ell(\ker(\overline{\ff}_{S_m})) \leq v_\ell(S_m) < \frac{m}{\ell-1}.\end{equation}
Suppose now for contradiction that $\overline{\ff}_{S_m}$ is non-trivial, in which case $\ker(\overline{\ff}_{S_m})$ lies in $Z(G)$ by (\ref{QS norsub}). From $m\leq v_\ell(G)$ and (\ref{embed ineq}), we see that
\[ m - v_\ell(Z(G)) \leq v_\ell(G) - v_\ell(Z(G)) \leq v_\ell(G) - v_\ell(\ker(\overline{\ff}_{S_m}))  < \frac{m}{\ell-1},\]
and so $v_\ell(Z(G))\geq1$. This implies that every prime factor of $|T|$ also divides $|Z(G)|$. From Burnside's theorem and (\ref{quotient of M}), it then follows that $\mathfrak{m}(G/Z(G))$ has at least three distinct prime divisors. From Lemma~\ref{Schur multiplier lem}, we deduce that
\[ G/Z(G)\simeq \PSL_n(q),\PSU_n(q)\mbox{ with $\PSL_n(q),\PSU_n(q)$ non-exceptional}.\]
Put $v_\ell(G)=x$ and $v_\ell(Z(G))=y$, where $x,y\geq1$. Then, we have
\[x - y < \frac{m}{\ell-1} \leq\frac{x}{\ell-1} \mbox{ and in particular }y > \frac{\ell-2}{\ell-1} \cdot x.\]
Also, from (\ref{non except}) and (\ref{quotient of M}), we see that
\begin{equation}\label{divisible}|Z(G)|\mbox{ divides }\begin{cases}\gcd(n,q-1) & \mbox{if }G/Z(G)\simeq\PSL_n(q),\\
\gcd(n,q+1)& \mbox{if }G/Z(G)\simeq \PSU_n(q).\end{cases}\end{equation}
Since $\ell^y$ divides $|Z(G)|$, from (\ref{divisible}) we have $\ell^y\leq n$, that is $y\leq \log(n)/\log(\ell)$. Observe that the order formulae in (\ref{order}) imply that
\begin{align*}
 (q-1)^{n-2} &\mbox{ divides }|\PSL_n(q)|,\\
 (q+1)^{\lfloor n/2\rfloor -1} &\mbox{ divides }|\PSU_n(q)|.\end{align*}
Since $\ell^y$ divides $|Z(G)|$, again from (\ref{divisible}) we see that $\ell$ divides $q-1$ and $q+1$, respectively, and in particular
\[ v_\ell(G) - v_\ell(Z(G)) \geq \begin{cases}
n-2 & \mbox{if }G/Z(G)\simeq\PSL_n(q),\\
\left\lfloor\frac{n}{2}\right\rfloor - 1& \mbox{if }G/Z(G)\simeq \PSU_n(q).
\end{cases}\]
In either case, this in turns yields
\[ x - 1 \geq x - y  \geq  \frac{n}{2}-\frac{1}{2} - 1\mbox{ and so }x\geq \frac{n-1}{2}.\]
Again, by Burnside's theorem, we may take $\ell\geq 5$. We obtain
\[ \frac{4}{3}\cdot\frac{\log(n)}{\log(5)} \geq \frac{\ell-1}{\ell-2}\cdot\frac{\log(n)}{\log(\ell)}\geq\frac{\ell-1}{\ell-2}\cdot y > x\geq \frac{n-1}{2}.\]
But then $n=2$, which contradicts that $5\leq \ell^y\leq n$. Hence, indeed $\overline{\ff}_{S_m}$ must be trivial, and this completes the proof.
\end{proof}

\begin{lem}\label{m=1}We have $N/M\simeq T$.
\end{lem}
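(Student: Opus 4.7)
The plan is to argue by contradiction, assuming $m \geq 2$, with the goal of producing a factorization of the simple group $T$ that violates Lemma \ref{factorization}. The first step is to extend Lemma \ref{Inn lem} to $\overline{\fh}$: the proof of Lemma \ref{Inn lem} uses only that the map in question is a homomorphism from the perfect group $G$ into $\Aut(N/M)$, so Proposition \ref{h prop}(a) gives $\overline{\fh}(G) \subseteq \Inn(N/M)$ by the same argument. Identifying $\Inn(T^m) \simeq T^m$ via $t \mapsto \conj_t$ (possible since $Z(T^m) = 1$), both $\overline{\ff}(G)$ and $\overline{\fh}(G)$ may be regarded as subgroups of $T^m$. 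Descending the relation $\fh(\sigma) = \conj(\fg(\sigma)) \cdot \ff(\sigma)$ of Proposition \ref{h prop} to $\Aut(N/M)$ then yields
\[ \overline{\fh}(\sigma) \cdot \overline{\ff}(\sigma)^{-1} = \overline{\fg}(\sigma) \text{ in } T^m \]
for all $\sigma \in G$, and surjectivity of $\overline{\fg}$ gives the set-theoretic factorization $T^m = C \cdot B$, where $B := \overline{\ff}(G)$ and $C := \overline{\fh}(G)$.

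Next I would argue that $\overline{\ff}$ and $\overline{\fh}$ must both be non-trivial. For if $\overline{\ff}$ were trivial, then $C = T^m$ would be a non-trivial quotient of $G$, which by (\ref{QS norsub}) is of the form $G/K$ with $K \leq Z(G)$ -- hence simple (if $K = Z(G)$) or quasisimple (otherwise) -- whereas $T^m$ for $m \geq 2$ is perfect with trivial center but not simple, a contradiction. By the same argument $\overline{\fh}$ is non-trivial. Thus $B$ and $C$ are each either $\simeq G/Z(G)$ (simple, trivial center) or quasisimple (with non-trivial center). Projecting to the $i$-th coordinate gives $T = \pi_i(C) \cdot \pi_i(B)$, and each factor is again a quotient of a quasisimple group, so is trivial, simple, or quasisimple; Lemma \ref{factorization} then rules out the possibility that for any $i$, both $\pi_i(B)$ and $\pi_i(C)$ are quasisimple.

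The hardest part, I expect, will be ruling out -- for every $i$ simultaneously -- the remaining cases in which at least one of $\pi_i(B), \pi_i(C)$ is trivial or simple with trivial center. This should require a joint analysis across all $m$ coordinates, using the surjectivity of $\overline{\fg}$ together with the divisibility constraints from Burnside's theorem and Lemma \ref{Schur multiplier lem}, in the same spirit as the proof of Lemma \ref{Inn lem}, in order to force some coordinate where both projections are quasisimple and thereby contradict Lemma \ref{factorization}.
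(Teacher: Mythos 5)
Your steps up to the factorization are fine: for $m\geq 2$ one does get that both $\overline{\ff}$ and $\overline{\fh}$ are non-trivial (since $T^m$ is centerless, perfect and non-simple, it cannot be a quotient of the quasisimple group $G$), hence $\overline{\ff}(G)$ and $\overline{\fh}(G)$ are both isomorphic to $G/K$ for some $K\leq Z(G)$, and $\Inn(N/M)=\overline{\fh}(G)\,\overline{\ff}(G)$. But the final step is a confessed gap, and the route you sketch for closing it does not work. Lemma~\ref{factorization} only bites when \emph{both} factors have non-trivial center, and nothing forces a coordinate $i$ where both $\pi_i(\overline{\ff}(G))$ and $\pi_i(\overline{\fh}(G))$ are quasisimple with non-trivial center: it is entirely consistent with everything you have established that $\ker(\overline{\ff})=\ker(\overline{\fh})=Z(G)$, so that both images, and all their projections, are copies of the centerless simple group $G/Z(G)$. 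No amount of ``joint analysis across coordinates'' will manufacture a non-trivial center out of that. (Indeed, the paper reserves Lemma~\ref{factorization} for the later Lemma~\ref{M = Z(G)}, after $m=1$ is known.)

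The missing idea is an order count. Take any non-trivial homomorphism $\varphi:G\to T^m$ (e.g.\ $\overline{\ff}$, or $\overline{\fg}$ itself when $\overline{\ff}$ is trivial) and project to a coordinate where it is non-trivial; since $\ker(\varphi^{(i)})\leq Z(G)$ by (\ref{QS norsub}) and $\varphi^{(i)}(G)\leq T$, comparing $|G|=|T|^m|M|$ with $|T|$ shows that $|T|^{m-1}|M|$ divides $|Z(G)|$. For $m\geq 2$ this forces $|T|\geq 60$ to divide $\mathfrak{m}(G/Z(G))$, which by Lemma~\ref{Schur multiplier lem} pins $G/Z(G)$ down to a non-exceptional $\PSL_n(\ell^a)$ or $\PSU_n(\ell^a)$ with $\ell\nmid|Z(G)|$; since $\ell$ divides $|G|=|T|^m|M|$ and hence $|T|^{m-1}|M|$, this contradicts $|T|^{m-1}|M|\mid |Z(G)|$. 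Your setup could be salvaged by running exactly this count on $\overline{\ff}(G)\simeq G/\ker(\overline{\ff})$ (namely $|G/\ker(\overline{\ff})|\leq |T|^m$ is not enough; you need the single-coordinate bound $\leq|T|$), but as written the proposal stops short of any contradiction.
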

\begin{proof}We have $\Inn(N/M)\simeq N/M \simeq T^m$. Depending on whether $\overline{\ff}$ is trivial or not, by Proposition~\ref{h prop}(c) and Lemma~\ref{Inn lem}, respectively, we see that there is a non-trivial homomorphism $\varphi:G\longrightarrow T^m$. Let $1\leq i\leq m$ be such that
\[ \begin{tikzcd}[column sep = 2cm]
\varphi^{(i)}:G \arrow{r}{\varphi} & T^m \arrow{r}{\text{\tiny projection}} & T^{(i)}\mbox{ (the $i$th copy of $T$)}
\end{tikzcd}\]
is non-trivial. Since $\ker(\varphi^{(i)})$ lies in $Z(G)$ by (\ref{QS norsub}), we have
\[  \frac{|T|^m|M|}{|Z(G)|}[Z(G):\ker(\varphi^{(i)})] = \frac{|G|}{|\ker(\varphi^{(i)})|} = |\varphi^{(i)}(G)| = \frac{|T|}{[T^{(i)}:\varphi^{(i)}(G)]},\]
and in particular
\begin{equation}\label{Z(G) eqn} |Z(G)| = |T|^{m-1}|M| [Z(G):\ker(\varphi^{(i)})] [T^{(i)}:\varphi^{(i)}(G)].\end{equation}
Suppose for contradiction that $m\geq 2$, in which case $|T|$ divides $|Z(G)|$ and hence $\mathfrak{m}(G/Z(G))$ by (\ref{quotient of M}). Since all groups of order at most $48$ are solvable, from Lemma~\ref{Schur multiplier lem} and (\ref{non except}), we see that
\[ G/Z(G)\simeq \PSL_n(\ell^a),\PSU_n(\ell^a),\mbox{ where $\ell$ is a prime, and $\ell\nmid |Z(G)|$}.\]
But $\ell$ divides $|G| = |T|^m|M|$ by (\ref{order}) and thus $|T||M|$. This shows that (\ref{Z(G) eqn}) cannot hold, so indeed $m=1$, and we have $N/M\simeq T$.
\end{proof}

For any $\sigma\in G$, recall that $\overline{\fh}(\sigma) = \conj(\overline{\fg}(\sigma))\cdot\overline{\ff}(\sigma)$ by definition, and so 
\begin{equation}\label{fixed points} \overline{\ff}(\sigma) = \overline{\fh}(\sigma)\iff \overline{\fg}(\sigma) = 1_{N/M} \iff \sigma\in  \fg^{-1}(M)\end{equation}
because $N/M$ has trivial center.

\begin{lem}\label{M = Z(G)}We have $G/Z(G)\simeq T$ and $|M| = |Z(G)|$.
\end{lem}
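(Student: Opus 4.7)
My plan is to work under the identification $\Inn(T) \simeq T$ (valid since $T$ is non-abelian simple, hence centerless) and to case-split on the kernels of $\overline{\ff}$ and $\overline{\fh}$. By Lemma~\ref{Inn lem}, $\overline{\ff}(G) \leq \Inn(T)$, and since $\overline{\fh}(\sigma) = \conj(\overline{\fg}(\sigma)) \cdot \overline{\ff}(\sigma)$ is a product of two elements of $\Inn(T)$, also $\overline{\fh}(G) \leq \Inn(T)$. Writing $f, g, h : G \to T$ for the reductions of $\overline{\ff}, \overline{\fg}, \overline{\fh}$ under this identification, the defining relation becomes $h(\sigma) = g(\sigma) f(\sigma)$, so $f, h$ are homomorphisms, $g$ is surjective, and $g(\sigma) = h(\sigma) f(\sigma)^{-1}$; consequently $T = h(G) f(G)$.

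I would then establish that $H := \fg^{-1}(M) \leq Z(G)$. Since $G/Z(G)$ is simple, $HZ(G)/Z(G)$ is either trivial or all of $G/Z(G)$; in the latter case $G = HZ(G)$, and combining $[HZ(G), HZ(G)] = [H, H]$ with $G = [G, G]$ yields $G \leq H$, contradicting $|H| = |M| < |N| = |G|$. Hence $|T| = [G : H] \geq [G : Z(G)] = |G/Z(G)|$.

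With these in hand, I case-split. If $f$ or $h$ is trivial, then $g$ coincides with $h$ or $f^{-1}$ and is a surjective homomorphism $G \to T$; its kernel is maximal proper normal and hence equals $Z(G)$ by (\ref{QS norsub}), yielding $G/Z(G) \simeq T$ and $|M| = |Z(G)|$. Otherwise both are non-trivial and $\ker f, \ker h \leq Z(G)$ by (\ref{QS norsub}); three sub-cases arise. In sub-case (i), $\ker f = \ker h = Z(G)$: the diagonal $(f, h) : G \to T \times T$ has image $\Gamma$ of order $|G/Z(G)|$, and its composition with $(t_1, t_2) \mapsto t_2 t_1^{-1}$ is surjective onto $T$ (this is the surjectivity of $g$), so $|G/Z(G)| \geq |T|$; combined with the opposite inequality, $f : G/Z(G) \hookrightarrow T$ is an isomorphism and $|M| = |Z(G)|$. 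In sub-case (ii), both kernels are proper in $Z(G)$: then $f(Z(G))$ and $h(Z(G))$ are non-trivial central subgroups of $f(G)$ and $h(G)$, so Szep's conjecture (Lemma~\ref{factorization}) contradicts $T = f(G) h(G)$. In sub-case (iii), say $\ker f = Z(G)$ and $\ker h \subsetneq Z(G)$ (the opposite is symmetric): then $\ker h \subsetneq \ker f$, and $H \leq \ker f$ gives $h(H) = f(H) = \{1\}$, so $H \leq \ker h$, while the inclusion $\ker h \leq \ker f$ yields $\ker h \leq H$; hence $H = \ker h$, giving $|h(G)| = |G|/|H| = |T|$ and so $h$ surjects onto $T$, forcing $\ker h = Z(G)$ by quasisimplicity, a contradiction.

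The delicate step is sub-case (iii): Szep's conjecture does not apply directly because one of the factors has trivial center, but the comparability $\ker h \subsetneq \ker f$ lets us identify $H$ with $\ker h$, which then forces $h$ to be surjective and self-contradicts via quasisimplicity.
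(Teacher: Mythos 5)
Your overall strategy (reduce to homomorphisms $f,h:G\to T$ with $h=gf$, use Lemma~\ref{Inn lem}, the factorization $T=h(G)f(G)$, Szep's theorem and (\ref{QS norsub})) is the same toolkit as the paper's, but there is one genuinely wrong step: the derivation of $H:=\fg^{-1}(M)\leq Z(G)$. You argue that ``since $G/Z(G)$ is simple, $HZ(G)/Z(G)$ is either trivial or all of $G/Z(G)$,'' but simplicity only constrains \emph{normal} subgroups, and $HZ(G)/Z(G)$ is not known to be normal here. Under your identification, $H$ is exactly the equalizer $\{\sigma\in G: f(\sigma)=h(\sigma)\}$ of two homomorphisms into $T$, which is a subgroup but in general not normal (an equalizer of this shape can be, e.g., a centralizer). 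The equality $\fg^{-1}(M)=Z(G)$ is in fact only established in the paper \emph{after} this lemma, in Lemma~\ref{f or h trivial}, so you cannot assume any normality of $H$ at this stage. Note also that the ``contradiction'' you draw in the case $G=HZ(G)$ is fine, but it is the dichotomy itself that fails.

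The damage is localized but real. Sub-case (i) survives, because the inequality $|T|\geq|G/Z(G)|$ you import there already follows from $\ker f=Z(G)$ and $f(G)\leq T$; you do not need the $H$-claim. Sub-case (iii), however, genuinely uses $H\leq\ker f$ to get $H\leq\ker h$, and as written that step is unsupported. It can be repaired with the counting inequality that the paper uses and that you never state: since $h(G)\leq\Inn(N/M)\simeq T$ and (as you correctly show) $\ker h\leq H$ with $|H|=|M|$, one gets $|h(G)|=|G|/|\ker h|\geq |G|/|M|=|T|$, so $h$ is surjective onto $T$; then $\ker h=Z(G)$ by centerlessness of $T$ and (\ref{QS norsub}), contradicting $\ker h\lneq Z(G)$. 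With that substitution (and dropping the $H\leq Z(G)$ paragraph entirely), your case analysis is complete and correct; the paper reaches the same conclusion more economically by first proving $|M|\leq|\ker\overline{\ff}|,|\ker\overline{\fh}|$ and $|\ker\overline{\ff}\cap\ker\overline{\fh}|\leq|M|$, and then using Szep only to force the two kernels to be comparable.
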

\begin{proof}By Lemma~\ref{Inn lem}, the image $\overline{\ff}(G)$ lies in $\Inn(N/M)$, and so plainly $\overline{\fh}(G)$ lies in $\Inn(N/M)$ as well. Since $\Inn(N/M)\simeq N/M$, we then have
\[ |G/\ker(\overline{\ff})|, |G/\ker(\overline{\fh})|\leq |N/M|,\mbox{ and so } |M| \leq |\ker(\overline{\ff})|,|\ker(\overline{\fh})|.\]
Trivially $\overline{\ff}(\sigma) = \overline{\fh}(\sigma)$ for all $\sigma\in\ker(\overline{\ff})\cap\ker(\overline{\fh})$, so by (\ref{fixed points}) we have
\[ |\ker(\overline{\ff})\cap \ker(\overline{\fh})| \leq |\fg^{-1}(M)|,\mbox{ and }|\fg^{-1}(M)| = |M|\]
because $\fg$ is bijective. Since $\overline{\fg}$ is surjective, we also have the factorization
\[ \Inn(N/M) = \overline{\ff}(G)\overline{\fh}(G),\mbox{ whence $\overline{\ff}(G)$ or $\overline{\fh}(G)$ has trivial center}\]
by Lemmas~\ref{factorization} and~\ref{m=1}. This implies that $\ker(\overline{\ff})\subset\ker(\overline{\fh})$ or $\ker(\overline{\fh})\subset\ker(\overline{\ff})$ has to hold, for otherwise $\ker(\overline{\ff}),\ker(\overline{\fh})\lneq Z(G)$ by (\ref{QS norsub}), and both $\overline{\ff}(G)$ and $\overline{\fh}(G)$ would have non-trivial center. By symmetry, we may assume that the former inclusion holds. Then, from the above inequalities, we deduce that
\[ |M| = |\ker(\overline{\ff})|,\mbox{ and so }G/\ker(\overline{\ff})\simeq \Inn(N/M) \simeq T\]
by comparing orders. But $\ker(\overline{\ff})$ lies in $Z(G)$ again by (\ref{QS norsub}), and $T$ has trivial center, so in fact $\ker(\overline{\ff})=Z(G)$. Both claims now follow.
\end{proof}

\begin{lem}\label{f or h trivial}Either $\overline{\ff}$ or $\overline{\fh}$ is trivial, and $Z(G) = \fg^{-1}(M)$.
\end{lem}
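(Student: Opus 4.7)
The plan is to build directly on what the proof of Lemma~\ref{M = Z(G)} has already extracted. Recall that lemma's argument assumed (by symmetry) the inclusion $\ker(\overline{\ff}) \subset \ker(\overline{\fh})$ and concluded that $|M| = |\ker(\overline{\ff})|$ and $\ker(\overline{\ff}) = Z(G)$. I will keep that same symmetry convention; the goal then becomes to show that $\ker(\overline{\fh}) = G$.

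By (\ref{QS norsub}), $\ker(\overline{\fh})$ is either contained in $Z(G)$ or is all of $G$. Since $Z(G) = \ker(\overline{\ff}) \subset \ker(\overline{\fh})$, the only two possibilities are $\ker(\overline{\fh}) = Z(G)$ and $\ker(\overline{\fh}) = G$. The second case is precisely $\overline{\fh}$ trivial, which is what I want. The first case has to be ruled out, and this is the main step of the proof.

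Assume for contradiction that $\ker(\overline{\ff}) = \ker(\overline{\fh}) = Z(G)$. Since $|G/Z(G)| = |T| = |\Inn(N/M)|$ by Lemma~\ref{M = Z(G)}, both $\overline{\ff}$ and $\overline{\fh}$ induce isomorphisms $\tilde{\ff},\tilde{\fh}:G/Z(G)\xrightarrow{\sim}\Inn(N/M)\simeq T$ (recall $\overline{\fh}(G)\subset\Inn(N/M)$ by combining Proposition~\ref{h prop}(a) with Lemma~\ref{Inn lem}). Then $\alpha := \tilde{\fh}^{-1}\circ\tilde{\ff}$ is an automorphism of the non-abelian simple group $T\simeq G/Z(G)$, and $\sigma Z(G)$ lies in $\mathrm{Fix}(\alpha)$ precisely when $\overline{\ff}(\sigma)=\overline{\fh}(\sigma)$. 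By (\ref{fixed points}), the set $\{\sigma\in G:\overline{\ff}(\sigma)=\overline{\fh}(\sigma)\}$ is $\fg^{-1}(M)$ of size $|M|=|Z(G)|$, hence $|\mathrm{Fix}(\alpha)|=|M|/|Z(G)|=1$, contradicting Lemma~\ref{fpf lem}. So indeed $\overline{\fh}$ is trivial; by symmetry we may have $\overline{\ff}$ trivial instead in the other case.

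For the remaining assertion $Z(G)=\fg^{-1}(M)$, I use (\ref{fixed points}) one more time. In the case $\overline{\fh}$ is trivial and $\ker(\overline{\ff})=Z(G)$, the condition $\overline{\ff}(\sigma)=\overline{\fh}(\sigma)$ reduces to $\overline{\ff}(\sigma)=1$, which cuts out exactly $Z(G)$; by (\ref{fixed points}) this set also equals $\fg^{-1}(M)$, giving the claim. The symmetric case $\overline{\ff}$ trivial with $\ker(\overline{\fh})=Z(G)$ is identical. The only nontrivial ingredient in the whole argument is the fixed-point-free obstruction from Lemma~\ref{fpf lem}; everything else is bookkeeping with orders and the normal-subgroup constraint (\ref{QS norsub}).
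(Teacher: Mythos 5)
Your proof is correct and follows essentially the same route as the paper: the decisive step in both is that if neither $\overline{\ff}$ nor $\overline{\fh}$ were trivial, their kernels would both equal $Z(G)$ and the induced automorphism $\tilde{\fh}^{-1}\circ\tilde{\ff}$ of $G/Z(G)\simeq T$ would fix only the identity by (\ref{fixed points}), contradicting Lemma~\ref{fpf lem}. The only cosmetic differences are that you import the normalization $\ker(\overline{\ff})=Z(G)$ from inside the proof of Lemma~\ref{M = Z(G)} rather than re-deriving it from that lemma's statement together with (\ref{QS norsub}), and you obtain $Z(G)=\fg^{-1}(M)$ directly from (\ref{fixed points}) instead of via Proposition~\ref{h prop}(c),(d).
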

\begin{proof}Suppose for contradiction that both $\overline{\ff}$ and $\overline{\fh}$ are non-trivial. By (\ref{QS norsub}), this means that both $\ker(\overline{\ff})$ and $\ker(\overline{\fh})$ lie in $Z(G)$. Since $G/\ker(\overline{\ff}),G/\ker(\overline{\fh})$ embed into $\Inn(N/M)$, by comparing orders and by Lemma~\ref{M = Z(G)}, we have
\[ \ker(\overline{\ff}) = Z(G) = \ker(\overline{\fh})\mbox{ and } \overline{\ff}(G) = \Inn(N/M) = \overline{\fh}(G).\]
From (\ref{fixed points}), we then deduce that $Z(G) \subset \fg^{-1}(M)$, which must be an equality by Lemma~\ref{M = Z(G)} and the bijectivity of $\fg$. The above also implies that $\overline{\ff}$ and $\overline{\fh}$, respectively, induce isomorphisms
\[ \varphi_f,\varphi_h: G/Z(G)\longrightarrow \Inn(N/M),\mbox{ and }\varphi_h^{-1}\circ\varphi_f\in\Aut(G/Z(G)).\]
But for any $\sigma\in G$, again by (\ref{fixed points}), we have
\[ (\varphi_h^{-1}\circ\varphi_f)(\sigma Z(G)) = \sigma Z(G) \iff \sigma \in \fg^{-1}(M) \iff \sigma Z(G) = 1_{G/Z(G)}.\]
This contradicts Lemma~\ref{fpf lem}. Thus, at least one of $\overline{\ff}$ or $\overline{\fh}$ is trivial.

\vspace{1mm}

Now, by Proposition~\ref{h prop}(c),(d), the surjective map
\[\varphi:G\longrightarrow N/M;\hspace{1em}\varphi(\sigma) = \begin{cases}
 \overline{\fg}(\sigma) &\mbox{if $\overline{\ff}$ is trivial}\\ \overline{\fg}(\sigma)^{-1} &\mbox{if $\overline{\fh}$ is trivial}
\end{cases}\]
is a homomorphism, and $\ker(\varphi)$ lies in $Z(G)$ by (\ref{QS norsub}). By comparing orders, we see from Lemma~\ref{M = Z(G)} that in fact $\ker(\varphi) = Z(G)$. But in both cases, we have $\ker(\varphi) = \fg^{-1}(M)$ by definition, so the claim follows.
\end{proof}

\begin{lem}\label{M central}We have $M\subset Z(N)$.
\end{lem}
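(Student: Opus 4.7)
The plan is to analyze the centralizer $C:=C_N(M)$, a characteristic subgroup of $N$, and show $C=N$, which is equivalent to $M\subset Z(N)$. By Lemma~\ref{f or h trivial} I may assume that $\overline{\ff}$ is trivial; the case $\overline{\fh}$ trivial is handled symmetrically, using $\fh$ and Proposition~\ref{h prop}(d) in place of $\ff$ and (c). Since $CM$ is also characteristic in $N$ (as a product of characteristic subgroups) and contains $M$, the maximality of $M$ as a proper characteristic subgroup of $N$ forces either $CM=N$ or $C\subseteq M$.

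To rule out $C\subseteq M$, observe that here $C=Z(M)$ and the conjugation action gives an injection $N/Z(M)\hookrightarrow\Aut(M)$, so the non-abelian simple group $T\simeq N/M$ would appear as a composition factor of $\Aut(M)$. But $|M|=|Z(G)|$ is bounded via Lemma~\ref{Schur multiplier lem} and (\ref{quotient of M}): when $T\not\simeq\PSL_n(q),\PSU_n(q)$, one has $|M|\leq 12$ and $|\Aut(M)|\leq 24<60\leq|T|$; in the $\PSL/\PSU$ case, $Z(G)$ is cyclic by (\ref{non except}), and a comparison of $|T|$ via the order formulas (\ref{order}) with the polynomial bound on $|\Aut(M)|$ again rules this out.

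To rule out $CM=N$ with $C\subsetneq N$: here $M$ is non-abelian (since $N/C\simeq M/Z(M)\neq 1$) and $C$ fits into a central extension $1\to Z(M)\to C\to T\to 1$. Using $[C,M]=1$ and the perfectness of $T$, one gets $C=[C,C]\cdot Z(M)$ and $[N,N]=[C,C]\cdot[M,M]$. Since $N$ is perfect, $N=CM=[C,C]\cdot M=[C,C]\cdot[M,M]$, and the inclusion $[C,C]\cap M\subseteq Z(M)$ yields the restrictive inequality $|M/[M,M]|\leq |Z(M)|$. This inequality fails for every non-abelian group of order at most $12$ (checked directly on $S_3$ and the non-abelian groups of order $12$), which handles the generic case. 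Any remaining non-abelian candidates in the $\PSL/\PSU$ case are eliminated by combining this inequality with Proposition~\ref{char prop}(c), which requires $\Hol(M)$ to contain a regular subgroup isomorphic to the cyclic group $Z(G)$ — a strong constraint typically incompatible with non-abelian $M$ (for instance, $\Hol(A_5)$ contains no element of order $60$).

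Therefore $C=N$, establishing $M\subset Z(N)$. The main obstacle will be exhaustively handling the $\PSL_n(q),\PSU_n(q)$ case, in which $|Z(G)|$ may be large and composite; there the order comparison and the regular-subgroup constraint from Proposition~\ref{char prop}(c) together must rule out every non-abelian candidate for $M$ of the appropriate order, requiring careful case-by-case verification.
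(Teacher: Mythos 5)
Your opening reduction is the paper's own argument in disguise: the kernel of the conjugation map $N\to\Out(M)$ is exactly $C_N(M)\cdot M$, so your dichotomy ``$CM=N$ or $C\subseteq M$'' (obtained from maximality of $M$) coincides with the paper's ``$\Phi$ trivial or $\ker(\Phi)=M$'' (obtained from simplicity of $N/M$). Your derivation of $M=Z(M)[M,M]$ in the branch $CM=N$, $C\neq N$ is correct and is a nice reformulation (it says precisely that $[M,M]$ is its own derived subgroup), and your order comparison $|T|>|\Aut(M)|$ in the branch $C\subseteq M$ is a legitimate shortcut when $|M|\leq 12$ and, with some care, when $G/Z(G)$ is a non-exceptional $\PSL_n(q)$ or $\PSU_n(q)$ so that $|M|=|Z(G)|\leq n$.

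There are, however, two genuine gaps. First, your case split ``either $T\not\simeq\PSL,\PSU$ (so $|M|\leq 12$) or $T$ is non-exceptional (so $Z(G)$ is cyclic of order at most $n$)'' omits the exceptional cases $G/Z(G)\simeq\PSL_3(4)$ and $\PSU_4(3)$ of Lemma~\ref{Schur multiplier lem}, where $\mathfrak{m}(G/Z(G))$ is $48$ or $36$ and $Z(G)$ need not be cyclic. There the order comparison genuinely fails: for $M\simeq(\bZ/2\bZ)^4\times\bZ/3\bZ$ one has $|\Aut(M)|=40320>20160=|\PSL_3(4)|$, which is why the paper must fall back on explicit \textsc{Magma} computations of $\Out(M)$ for the handful of candidate groups $M$. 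Second, in the branch $CM=N$ your elimination of non-abelian $M$ beyond order $12$ rests on the claim that a regular subgroup of $\Hol(M)$ isomorphic to $Z(G)$ is ``typically incompatible'' with $M$ non-abelian; that is not a proof, and $|M|=|Z(G)|$ can exceed $60$ when $\gcd(n,q\pm1)$ is large, so non-abelian $M$ with $[M,M]$ perfect (e.g.\ of the shape $\SL_2(5)\times\bZ/2\bZ$) are not excluded by your inequality alone. The tool that actually closes this branch, and which the paper invokes, is \cite[Theorem 1.3(b)]{Tsang solvable}: a regular \emph{abelian} subgroup of $\Hol(M)$ forces $M$ to be metabelian. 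Combined with your identity $M=Z(M)[M,M]$, which gives $[M,M]=[M,M]^{(1)}=M^{(2)}$, metabelianity yields $[M,M]=1$, contradicting $M$ non-abelian (the paper instead runs the derived series of $N=MC$ directly to get $N=C^{(2)}\subseteq C$). With that citation supplied and the two exceptional multiplier cases treated, your argument would go through.
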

\begin{proof}Since $M$ is normal in $N$, we have a homomorphism
\[ \begin{tikzcd}[column sep = 3cm]
\Phi:N \arrow{r}{\eta\mapsto (x\mapsto\eta x\eta^{-1})} & \Aut(M) \arrow{r} \arrow{r}{\text{\tiny projection}} & \Out(M)
\end{tikzcd}\]
whose kernel clearly contains $M$. Either $\Phi$ is trivial or $\ker(\Phi)=M$ because $N/M$ is simple by Lemma~\ref{m=1}.

\vspace{1mm}

Suppose first that $\Phi$ is trivial. This implies that
\[ N = MC,\mbox{ where $C$ is the centralizer of $M$ in $N$}.\]
Given $i\in\bN_{\geq 0}$ and a group $\Gamma$, let $\Gamma^{(i)}$ denote its $i$th derived subgroup. Since elements in $M$ and $C$ commute, we easily see that
\[ N^{(i)} = M^{(i)}C^{(i)} \mbox{ for all }i\in\bN_{\geq 0}.\]
By Lemma~\ref{f or h trivial} and Proposition~\ref{char prop}(c), there is a regular subgroup of $\Hol(M)$ which is isomorphic to $Z(G)$. Since $Z(G)$ is abelian, it then follows from \cite[Theorem 1.3(b)]{Tsang solvable} that $M$ is metabelian, namely $M^{(2)}=1$. Since $N$ is perfect, we deduce that
\[ N = N^{(1)} = N^{(2)} = M^{(2)}C^{(2)} = C^{(2)}\mbox{ and so }N = C.\]
This means that all elements in $N$ centralize $M$, that is $M\subset Z(N)$.

\vspace{1mm}

Suppose now that $\ker(\Phi)=M$, in which case $N/M$ embeds into $\Out(M)$. From Lemmas~\ref{m=1} and~\ref{M = Z(G)}, we then see that
\[ G/Z(G) \mbox{ embeds into } \Out(M),\mbox{ and recall }|M|=|Z(G)|.\]
Notice that then $\Out(M)$ and in particular $\Aut(M)$ must be insolvable. Let $\mathfrak{M} =\{1,2,3,4,6,12\}$ be as in Lemma~\ref{Schur multiplier lem}. We consider three cases.
\begin{enumerate}[1.]
\item $\mathfrak{m}(G/Z(G))$ lies in $\mathfrak{M}$: By (\ref{quotient of M}) we know that $|M|=|Z(G)|$ divides one of the numbers in $\mathfrak{M}$. But we checked in {\textsc{Magma}} \cite{magma} that no such group $M$ has insolvable $\Aut(M)$.
\item $G/Z(G)\simeq\PSL_3(4),\PSU_4(3)$: Recall Lemma~\ref{Schur multiplier lem}. Again by (\ref{quotient of M}) we know that $|M|=|Z(G)|$ divides $48$ or $36$. Since $\Aut(M)$ must be insolvable, we checked in {\textsc{Magma}} that $M$ has {\textsc{SmallGroup}} ID equal to one of
\begin{equation}\label{ID list} (8,5),(16,14),(24,15),(48,50),(48,51),(48,52),\end{equation}
and in particular $\mathfrak{m}(G/Z(G))$ is divisible by $8$. Hence, we have
\[G/Z(G) \simeq \PSL_3(4),\mbox{ and note that }|\PSL_3(4)| = 20160.\]
Again, using the {\tt OuterOrder} command, we computed in {\textsc{Magma}} that
\[ |\Out(M)| = 168,20160,336,120,1344,40320,\]
respectively, when $M$ has {\textsc{SmallGroup}} ID in (\ref{ID list}). Moreover, the group $M$ is abelian and there is no subgroup isomorphic to $\PSL_3(4)$ in $\Aut(M)$, when $M$ has {\textsc{SmallGroup}} ID equal to $(16,14),(48,52)$. We then deduce that $G/Z(G)$ cannot embed into $\Out(M)$.
\item $G/Z(G)\simeq\PSL_n(q),\PSU_n(q)$ with $\PSL_n(q),\PSU_n(q)$ non-exceptional: We may assume that $M\neq1$. Then, by \cite[Corollary 3.3]{Isaacs}, we have
\[ |\varphi|\leq |M|-1\mbox{ for all $\varphi\in\Aut(M)$}.\]
Since $|M| = |Z(G)|$, from (\ref{non except}) and (\ref{quotient of M}), we deduce that
\[ |\varphi\Inn(M)|\leq |\varphi| \leq \min\{n-1,q\} \mbox{ for all }\varphi\in\Aut(M).\]
Since $\Aut(M)$ is insolvable, we must have $n\geq 4$, and so $n=2+1+n_0$ for some integer $n_0\geq 1$. But then $G/Z(G)$ would contain an element of order $q^2 - 1 > q$ by \cite[Corollary 3(3)]{PSLPSU} and so cannot embed into $\Out(M)$.
\end{enumerate} 
In all cases, we obtained a contradiction. Hence, the case $\ker(\Phi)=M$ in fact does not occur, so indeed $M\subset Z(N)$.
\end{proof}

We are now ready to prove Proposition \ref{thm2}.

\begin{proof}[Proof of Proposition $\ref{thm2}$] By Lemma~\ref{f or h trivial}, either $\overline{\ff}$ or $\overline{\fh}$ is trivial.  Since $N$ is perfect, and $M\subset Z(N)$ by Lemma~\ref{M central}, the homomorphism
\[ \Aut(N)\longrightarrow \Aut(N/M);\hspace{1em}\varphi\mapsto (\eta M\mapsto \varphi(\eta)M)\]
is injective; see the proof of \cite[Proposition 3.5(c)]{Tsang HG}, for example. Therefore, either $\ff$ or $\fh$ is trivial. But clearly
\[ \mathcal{G} = \begin{cases}
\rho(N) &\mbox{if $\ff$ is trivial},\\ \lambda(N) &\mbox{if $\fh$ is trivial}.
\end{cases}\]
This completes the proof.
\end{proof}

\section*{Acknowledgments}

Research supported by the Young Scientists Fund of the National Natural Science Foundation of China (Award No.: 11901587).

\vspace{1mm}

The author thanks the referee for helpful comments.


\begin{thebibliography}{99}

\bibitem{A book}
M. Aschbacher, \emph{Finite group theory}. Second edition. Cambridge Studies in Advanced Mathematics, 10. Cambridge University Press, Cambridge, 2000.

\bibitem{magma}
W. Bosma, J. Cannon, and C. Playoust, \emph{The Magma algebra system. I. The user language}, J. Symbolic Comput., 24 (1997), 23--265.

\bibitem{PSLPSU}
A. A. Buturlakin, \emph{Spectra of finite linear and unitary groups} (Russian), Algebra Logika 47 (2008), no. 2, 157--173, 264; translation in
Algebra Logic 47 (2008), no. 2, 91--99.

\bibitem{By96}
N. P. Byott, \emph{Uniqueness of Hopf-Galois structure of separable field extensions}, Comm. Algebra 24 (1996), no. 10, 3217--3228. Corrigendum, \emph{ibid}. no. 11, 3705.

\bibitem{Byott simple}
N. P. Byott, \emph{Hopf-Galois structures on field extensions with simple Galois groups}, Bull. London Math. Soc. 36 (2004), no. 1, 23--29.


\bibitem{Childs simple}
S. Carnahan and L. N. Childs, \emph{Counting Hopf-Galois structures on non-abelian Galois field extensions}, J. Algebra 218 (1999), no. 1, 81--92.

\bibitem{Childs book}
L. N. Childs, \emph{Taming wild extensions: Hopf algebras and local Galois module theory}. Mathematical Surveys and Monographs, 80. American Mathematical Society, Providence, RI, 2000.

\bibitem{Szep conj}
E. Fisman and Z. Arad, \emph{A proof of Szep's conjecture on nonsimplicity of certain finite groups}, J. Algebra 108 (1987), no. 2, 340--354.

\bibitem{G book}
D. Gorenstein, \emph{Finite simple groups. An introduction to their classification}. University Series in Mathematics. Plenum Publishing Corp., New York, 1982.

\bibitem{GP}
C. Greither and B. Pareigis, \emph{Hopf-Galois theory for separable field extensions}, J. Algebra 106 (1987), no. 1, 261--290.

\bibitem{YBE}
L. Guarnieri and L. Vendramin, \emph{Skew braces and the Yang-Baxter equation}, Math. Comp. 86 (2017), no. 307, 2519--2534.

\bibitem{RG}
R. M. Guralnick, \emph{Subgroups of prime power index in a simple group}, J. Algebra 81 (1983), no. 2, 304--311.

\bibitem{SL bound}
R. Guralnick, T. Penttila, C. E. Praeger, and J. Saxl, \emph{Linear groups with orders having certain large prime divisors}, Proc. London Math. Soc. (3) 78 (1999), no. 1, 167--214.

\bibitem{Isaacs}
I. M. Isaacs, \emph{Finite group theory}. Graduate Studies in Mathematics, 92. American Mathematical Society, Providence, RI, 2008.

\bibitem{groups book}
P. Kleidman and M. Liebeck, \emph{The subgroup structure of the finite classical groups}. London Mathematical Society Lecture Note Series, 129. Cambridge University Press, Cambridge, 1990.

\bibitem{Tsang HG}
C. Tsang, \emph{Non-existence of Hopf-Galois structures and bijective crossed homomorphisms}, J. Pure Appl. Algebra 223 (2019), no. 7, 2804--2821.

\bibitem{Tsang char simple}
C. Tsang, \emph{Hopf-Galois structures of isomorphic-type on a non-abelian characteristically simple extension}, Proc. Amer. Math. Soc. 147 (2019), no. 12, 5093--5103.

\bibitem{Tsang Sn}
C. Tsang, \emph{Hopf-Galois structures on a Galois $S_n$-extension}, J. Algebra 531 (2019), 349--361.

\bibitem{Tsang AS1}
C. Tsang, \emph{On the multiple holomorph of a finite almost simple group}, New York J. Math. 25 (2019), 949--963.

\bibitem{Tsang solvable}
C. Tsang and C. Qin, \emph{On the solvability of regular subgroups in the holomorph of a finite solvable group}, Internat. J. Algebra Comput. 30 (2020), no. 2, 253--265.

\bibitem{Tsang AS}
C. Tsang, \emph{Hopf-Galois structures on finite extensions with almost simple Galois group}, J. Number Theory 214 (2020), 286--311.

\end{thebibliography}
\end{document}